      \theoremstyle{plain}
      \newtheorem{theorem}{Theorem}[section]
      \newtheorem{lemma}[theorem]{Lemma}
            \newtheorem{problem}[theorem]{Problem}
      \newtheorem{corollary}[theorem]{Corollary}
      \theoremstyle{definition}
      \theoremstyle{remark}
\def\ocn{\mbox{\rm odd-cr}}
\title{Density theorems for intersection graphs of $t$-monotone curves}
\author{Andrew Suk\thanks{Massachusetts Institute of Technology, Cambridge. Email: {\tt asuk@math.mit.edu}.  Research funded by an NSF Postdoctoral Fellowship. }}
\begin{document}

\maketitle

\begin{abstract}
A curve $\gamma$ in the plane is $t$-monotone if its interior has at most $t-1$ vertical tangent points.  A family of $t$-monotone curves $F$ is \emph{simple} if any two members intersect at most once.  It is shown that if $F$ is a simple family of $n$ $t$-monotone curves with at least $\epsilon n^2$ intersecting pairs (disjoint pairs), then there exists two subfamilies $F_1,F_2 \subset F$ of size $\delta n$ each, such that every curve in $F_1$ intersects (is disjoint to) every curve in $F_2$, where $\delta$ depends only on $\epsilon$.  We apply these results to find pairwise disjoint edges in simple topological graphs with $t$-monotone edges.

\end{abstract}

\section{Introduction}

Given a collection of objects $C$ in the plane, the \emph{intersection graph} $G(C)$ has vertex set $C$ and two objects are adjacent if and only if they have a nonempty intersection.  Intersection graphs have recently received a lot of attention due to its applications in VLSI design \cite{vlsi}, map labeling \cite{map,imai}, graph drawing \cite{fox,fulek,suk,toth}, and graph theory \cite{kozik}.  Over the past several decades, many researchers have shown that intersection graphs of certain geometric objects, such as of segments, chords of a circle, axis parallel rectangles, etc.,  have very strong properties (see \cite{gyarfas,asplund,suk2,fox}).  The aim of this paper is to establish density-type theorems for intersection graphs of $t$-monotone curves in the plane.

A \emph{curve} in the plane is the image of a smooth injective function $f:I \rightarrow \mathbb{R}^2$, whose domain is a closed interval $I\subset \mathbb{R}$.  For integer $t\geq 1$, a curve $\gamma$ in the plane is \emph{$t$-monotone},\footnote{A 1-monotone curve is often referred to as \emph{$x$-monotone}.  Every $t$-monotone curve can be decomposed into $t$ 1-monotone curves.} if its interior has at most $t-1$ vertical tangent points.  In 2001, Pach and Solymosi proved the following two theorems.

 \begin{theorem}[\cite{solymosi}]
 \label{first}
 Let $F$ be a family of $n$ segments in the plane with at least $\epsilon n^2$ intersecting pairs.  Then there exists a $\delta$ that depends only on $\epsilon$, and two subfamilies $F_1,F_2 \subset F$, such that $|F_1|,|F_2|\geq \delta n$, and every segment in $F_1$ intersects every segment in $F_2$.
 \end{theorem}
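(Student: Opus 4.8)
The plan is to derive the theorem from a same-type lemma for segments, in the spirit of B\'{a}r\'{a}ny and Valtr. We may assume that no segment of $F$ is vertical and that the $2n$ endpoints are in general position (both can be arranged by a generic rotation together with an arbitrarily small perturbation, which does not change which pairs of segments cross); write $a_i$ and $b_i$ for the left and right endpoint of $s_i$. A short case analysis shows that whether $s_i$ and $s_j$ cross is determined by the labelled order type of the four points $a_i,b_i,a_j,b_j$: the segments cross if and only if the line through $a_i,b_i$ separates $a_j$ from $b_j$ and the line through $a_j,b_j$ separates $a_i$ from $b_i$, equivalently iff the four points lie in convex position with $\{a_i,b_i\}$ and $\{a_j,b_j\}$ alternating along their convex hull. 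Thus ``crossing'' is a fixed Boolean combination of orientations of triples of endpoints, realized by only a bounded number of labelled order types.

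Next I would make the relation bipartite. Color the segments of $F$ red or blue uniformly at random; in expectation at least $\epsilon n^2/2$ crossing pairs are bichromatic, so fix a partition $F = F^{(1)}\cup F^{(2)}$ realizing at least $\epsilon n^2/2$ crossing pairs between the classes; in particular $|F^{(1)}|,|F^{(2)}|\ge \epsilon n/2$. Since only boundedly many labelled order types correspond to a crossing, by the pigeonhole principle a single such order type $\tau$ is realized by at least $\epsilon' n^2$ of these bichromatic crossing pairs, where $\epsilon'=\epsilon'(\epsilon)>0$. The core step is then the following same-type statement: there are $\delta=\delta(\epsilon)>0$ and subfamilies $F_1\subseteq F^{(1)}$, $F_2\subseteq F^{(2)}$ with $|F_1|,|F_2|\ge\delta n$ such that the quadruple $a_i,b_i,a_j,b_j$ has order type $\tau$ for every $s_i\in F_1$ and $s_j\in F_2$. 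Granting it, every segment of $F_1$ crosses every segment of $F_2$, which is exactly the conclusion, with $\delta$ depending only on $\epsilon$.

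The remaining task is to prove this same-type statement, and I expect it to be the whole difficulty. The natural attempt is to feed the four point sets $\{a_i:s_i\in F^{(1)}\}$, $\{b_i:s_i\in F^{(1)}\}$, $\{a_j:s_j\in F^{(2)}\}$, $\{b_j:s_j\in F^{(2)}\}$ into the B\'{a}r\'{a}ny--Valtr same-type lemma, in its positive-fraction form, so that the common order type of the surviving transversals is forced to be the prescribed $\tau$ (a positive fraction of transversals already realize $\tau$), obtaining large subsets of left endpoints and of right endpoints with all transversal order types equal. The main obstacle is coherence: the crossing of $s_i$ with $s_j$ depends on four points that come in two coordinated pairs, whereas the same-type lemma applied naively to the four sets returns a large set of ``good'' left endpoints and a large set of ``good'' right endpoints that need not belong to a common set of segments, so one might recover no segments at all. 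To overcome this one runs the iterated ham-sandwich partition behind the same-type lemma while assigning each segment to a cell by one fixed endpoint --- so that the cells refine the set of segments, not merely the set of points --- and then, within each resulting product of ``same-type'' cells, repeats the construction for the other endpoints; only a bounded number of rounds is needed, so all subfamilies shrink by a single constant factor. Equivalently one is proving the planar case of a same-type lemma for $2$-point configurations, a special case of the crossing-pattern theorem of Alon, Pach, Pinchasi, Radoi\v{c}i\'{c} and Sharir for semialgebraic sets; once that is in hand, the reductions above deliver the theorem with $\delta$ an explicit function of $\epsilon$ and the same-type constant.
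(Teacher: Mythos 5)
First, a point of orientation: the paper does not actually prove Theorem \ref{first} --- it is quoted from Pach and Solymosi \cite{solymosi} --- and the paper's own route to a generalization (Theorem \ref{main1}) is entirely different from yours: it clusters the four endpoint classes into trapezoids by random sampling and vertical decompositions (Lemmas \ref{sample} and \ref{structure}), runs a topological case analysis to obtain the two-color Theorem \ref{key}, and then boosts to a density statement with the weak regularity lemma (Theorem \ref{sz}). Your order-type route is instead the one associated with \cite{solymosi} and \cite{alon}. Your preprocessing is fine: the characterization of crossing by the two mutual-separation conditions, the random bipartition, and the pigeonhole over the boundedly many labelled order types are all correct, as is your observation that one needs a positive-fraction form to force the surviving type to be a crossing type rather than merely some constant type.

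The gap is exactly where you say you expect it to be, and your sketch does not close it. The coordinated same-type statement carries the entire content of the theorem, and the proposed fix --- run the ham-sandwich iteration ``assigning each segment to a cell by one fixed endpoint'' and then ``repeat for the other endpoints'' --- does not work as described, because there is no round in which only one endpoint per segment is relevant: every orientation triple entering the crossing predicate, e.g.\ the orientation of $(a_i,b_i,a_j)$, couples \emph{both} endpoints of one segment with one endpoint of the other, so a partition that refines the segments via their left endpoints controls nothing about where the cutting lines sit relative to the right endpoints, and the same-type guarantee for these mixed triples is never established. (A two-stage scheme can be made to work for segments, but by exploiting convexity --- e.g.\ first clustering the left and right endpoints of one family into two small boxes and then selecting the supporting lines of the other family that separate those boxes --- which is a different argument from the same-type iteration you describe and is closer in spirit to the paper's Lemma \ref{structure}.) Your fallback of invoking the crossing-pattern theorem of Alon et al.\ \cite{alon} is logically sound but reduces Theorem \ref{first} to a strictly more general and later result whose proof contains all of the work being omitted; as a self-contained proof of the statement, the crux is missing.
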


 \begin{theorem}[\cite{solymosi}]
  \label{second}
  Let $F$ be a family of $n$ segments in the plane with at least $\epsilon n^2$ disjoint pairs.  Then there exists a $\delta$ that depends only on $\epsilon$, and two subfamilies $F_1,F_2 \subset F$, such that $|F_1|,|F_2|\geq \delta n$, and every segment in $F_1$ is disjoint to every segment in $F_2$.
 \end{theorem}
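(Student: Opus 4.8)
The plan is to reduce the disjointness relation on segments to a statement about a bounded number of order types, and then to apply a same-type partition.

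First I would normalize: after an arbitrarily small perturbation of the $2n$ endpoints we may assume they are in general position (no three collinear, no two segments spanning a common line, all coordinates distinct), and since two disjoint closed segments are at positive distance, each of the $\epsilon n^{2}$ disjoint pairs survives the perturbation, so no generality is lost. The crucial observation is that whether two segments $s=[a,b]$ and $s'=[a',b']$ are disjoint depends only on the \emph{order type} of the four endpoints $a,b,a',b'$: by the standard segment-crossing test, $s$ meets $s'$ precisely when $a,b$ lie on opposite sides of the line through $a',b'$ and $a',b'$ lie on opposite sides of the line through $a,b$, which is a fixed Boolean condition on the four orientations of the triples of points in $\{a,b,a',b'\}$. (Equivalently, identifying a segment with the point $(a,b)\in\mathbb{R}^{4}$, disjointness is a semialgebraic relation on pairs of such points of bounded complexity.)

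Next, split $F$ into halves $F',F''$ of size $n/2$ so that at least $\epsilon n^{2}/4$ disjoint pairs run between them (a uniformly random balanced split achieves this in expectation), and apply the same-type lemma of B\'ar\'any and Valtr, iterated into a \emph{same-type partition} with an auxiliary parameter $\eta>0$: each of the four planar point sets --- the left endpoints of $F'$, the right endpoints of $F'$, the left endpoints of $F''$, the right endpoints of $F''$ --- is partitioned into a bounded number $K=K(\eta)$ of roughly equal classes so that all but an $\eta$-fraction of the quadruples of classes, one from each set, are ``same-type'', i.e.\ every transversal quadruple realises one fixed order type. This induces a partition of $F'$ into classes indexed by the class of the left endpoint and the class of the right endpoint, and likewise a partition of $F''$; since disjointness is a function of the order type of the four endpoints, for all but an $O(\eta)$-fraction of the pairs (class of $F'$, class of $F''$) the pair is \emph{homogeneous}: either every segment of the first class is disjoint from every segment of the second, or none is.

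Finally, the $\epsilon n^{2}/4$ disjoint cross pairs cannot all lie in the inhomogeneous pairs of classes, which jointly contain only $O(\eta)n^{2}$ pairs of segments; so, choosing $\eta$ small in terms of $\epsilon$ (and first discarding classes too small to matter), some homogeneous pair of classes still contains a disjoint cross pair, hence is \emph{completely} disjoint, with both classes of size at least $\delta n$ for a suitable $\delta=\delta(\epsilon)>0$; these are the required $F_{1},F_{2}$. I expect the main obstacle to be the middle step: the same-type lemma partitions \emph{points}, whereas we need a partition of the \emph{segments} that simultaneously controls the same-segment order types and the mixed ones and keeps all classes both linear in size and equitable enough for the final count, so producing a ``segment-respecting, equitable'' same-type partition is the technical heart. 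If one is willing to invoke the semi-algebraic regularity lemma of Alon, Pach, Pinchasi, Radoi\v{c}i\'c and Sharir, this step collapses: apply that lemma directly to the bounded-complexity disjointness relation on the $n$ points of $\mathbb{R}^{4}$ to obtain an equitable bounded partition with few irregular pairs, and run the same counting step.
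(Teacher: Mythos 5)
Your reduction of disjointness to the order type of the four endpoints is correct (after the perturbation, which you rightly justify by the positive distance between disjoint closed segments), and the overall architecture --- homogenize most pairs, then count --- is sound in outline. But the middle step, which you yourself flag as the technical heart, is a genuine gap and not just a technicality. The same-type partition controls the fraction of \emph{bad quadruples of classes} (or their weight $\sum |A||B||C||D|$), not the fraction of \emph{segment pairs} falling into bad quadruples. If each of the four endpoint sets is cut into $K=K(\eta)$ equitable classes, a single quadruple can contain up to $(n/2K)^2$ segment pairs while the segments occupy only about $K^2$ of the $K^4$ quadruples (the left-endpoint class of a segment can essentially determine its right-endpoint class). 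So even if only an $\eta$-fraction of quadruples is bad, up to an $\eta K^2$-fraction of the segment pairs can be bad; taking $\eta=1/K^2$ concentrated on the ``diagonal'' quadruples shows that \emph{all} segment pairs can be bad. Since $K(\eta)\to\infty$ as $\eta\to 0$, the requirement $\eta K(\eta)^2<\epsilon$ is circular, and your claim that only an $O(\eta)$-fraction of pairs is inhomogeneous does not follow. Your fallback --- applying a regularity/partition statement to the segments themselves, viewed as points of $\mathbb{R}^4$ with the semialgebraic disjointness relation --- does repair this, because there the parts are sets of segments and the counting is the trivial one; but that is essentially invoking the theorem of Alon, Pach, Pinchasi, Radoi\v{c}i\'c and Sharir, of which the present statement is already a special case, so it is a citation rather than a proof.

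For comparison: the paper does not reprove this theorem either (it is quoted from Pach--Solymosi), but its Theorem \ref{main2} contains it as the case $t=1$, and the route there is entirely different from yours. Instead of order types, the paper uses a Clarkson--Shor-type random sample and the vertical decomposition of the sampled curves (Lemma \ref{sample}) to trap a linear fraction of endpoints in a trapezoid avoided by a linear fraction of the other family, localizes all four endpoint clusters (Lemma \ref{structure}), runs a purely topological case analysis on how a reference arc $\alpha'$ separates the red endpoint regions (Theorem \ref{key}), and only then applies the weak bipartite regularity lemma to pass from density to a complete bipartite pattern. That argument never needs bounded algebraic complexity --- only that any two curves cross at most once --- which is exactly why it extends to $t$-monotone curves, whereas the order-type/semialgebraic route is confined to objects of bounded description complexity.
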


 These theorems were later generalized by Alon et al. to semi-algebraic sets in $\mathbb{R}^d$ \cite{alon}, and by Basu \cite{basu} to definable sets belonging to some fixed definable family of sets in an o-minimal structure.  In all three papers \cite{solymosi}, \cite{alon}, \cite{basu}, the authors only considered geometric objects with bounded or fixed \emph{description complexity}, that is, each geometric object can be encoded as a point in $\mathbb{R}^q$ where $q = q(d)$.  Previously, there were no known generalizations of Theorem \ref{second} to geometric objects with large complexity.

Our main result generalizes Theorems \ref{first} and \ref{second} to $t$-monotone curves, which can have arbitrarily large complexity.  We say that a family of curves $F$ is \emph{simple} if any two members intersect at most once and no two curves are tangent, i.e., if two curves have a common interior point, they must properly cross at that point.

 \begin{theorem}
\label{main1}
Let $F$ be a simple family of $n$ $t$-monotone curves in the plane with at least $\epsilon n^2$ intersecting pairs.  Then there exists a constant $c_t$ that depends only on $t$, and two subfamilies $F_1,F_2 \subset F$, such that $|F_1|,|F_2|\geq \epsilon^{c_t} n$, and every curve in $F_1$ intersects every curve in $F_2$.

\end{theorem}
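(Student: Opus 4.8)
The strategy is to reduce, in two pigeonholing stages, to a purely combinatorial statement about how $x$-monotone curves are ordered along vertical lines, and then to prove a positive-fraction lemma for that statement. The first stage passes from $t$-monotone to $1$-monotone curves: cut each curve of $F$ at its at most $t-1$ vertical tangent points into at most $t$ $x$-monotone arcs, noting that arcs of distinct curves still cross at most once (any such crossing is a crossing of the two original curves) and arcs of one curve are pairwise disjoint. If $\gamma_i,\gamma_j$ cross, their crossing point lies on one arc of $\gamma_i$ and one of $\gamma_j$; recording this ordered pair of arc indices gives one of $O(t^2)$ colours on the crossing pairs, so at least $\epsilon n^2/O(t^2)$ of them share a colour. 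Letting $R$ be the corresponding arcs of the ``first'' curves and $B$ those of the ``second'' curves, we obtain $|R|,|B|\le n$, a simple family $R\cup B$ of $x$-monotone curves with at least $\epsilon' n^2$ crossing $R$--$B$ pairs, $\epsilon'=\epsilon/O(t^2)$, and it suffices to find $R'\subseteq R$, $B'\subseteq B$ of size at least $(\epsilon')^{O(1)}n$ with every curve of $R'$ crossing every curve of $B'$; since the arcs of $R$ (resp.\ $B$) come from distinct original curves, pulling back (and performing a routine step to make the two subfamilies disjoint) then gives Theorem~\ref{main1} with $c_t$ a function of $t$ only.

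The second stage reduces to a single clean configuration. Write each $x$-monotone curve as the graph of a continuous function over an $x$-interval. For a crossing $R$--$B$ pair $(r,b)$, the two endpoints of their common $x$-interval are a left endpoint of one of $r,b$ and a right endpoint of one of $r,b$ (four cases), and, since the pair crosses exactly once, one of the two curves is above the other at the left end of the common interval and below it at the right end (two cases); in general position no crossing occurs at an endpoint of a common interval. Pigeonholing over these $O(1)$ types, a positive fraction of the crossing pairs fall into one type, say: the $x$-interval $[l_r,\rho_r]$ of $r$ is contained in that of $b$, and $r(l_r)>b(l_r)$ while $r(\rho_r)<b(\rho_r)$ (the remaining cases are symmetric). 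We are thus left with at least $\eta n^2$ ``good'' pairs, $\eta=\Omega(\epsilon')$, where $(r,b)$ is good exactly when $b$ is defined at both $l_r$ and $\rho_r$, lies below the left endpoint of $r$, and lies above the right endpoint of $r$; equivalently $b$ lies in a down-set of the vertical order at $x=l_r$ and an up-set of the vertical order at $x=\rho_r$.

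The third stage is the heart of the argument. For each $r$ the set $G_r=\{b\in B:(r,b)\text{ good}\}$ is the intersection of a down-set of one vertical order with an up-set of another, and $\sum_r|G_r|\ge\eta n^2$; we want $R'$ and a common admissible set $B'\subseteq\bigcap_{r\in R'}G_r$, both of size $\Omega(\eta)n$. The plan is to locate, by pigeonholing over the $O(n)$ vertical slabs cut out by the endpoint $x$-coordinates, a vertical line together with one further reference line so that for a positive fraction of the good pairs the membership $b\in G_r$ is governed by the order in which $B$-curves meet these two lines alone; this turns the surviving sets $G_r$ into genuine intervals in a single linear order of a linear-sized subfamily of $B$, with total length $\Omega(\eta)n^2$. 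One then finishes with an elementary interval lemma: an interval of length $L$ cannot concentrate more than $O(L)$ of its own length onto any much shorter subinterval, so $n$ intervals of total length $\Omega(\eta)n^2$ contain a subinterval $J$ of length $\Omega(\eta)n$ lying inside $\Omega(\eta)n$ of them; taking $B'$ to be the curves indexed by $J$ and $R'$ those intervals, every resulting pair is good, hence crossing.

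The \emph{main obstacle} is the third stage. The naive attempt of charging each crossing pair to the vertical slab containing its crossing point yields only $\Omega(n)$ special pairs in the heaviest slab, far short of the $\Omega(n^2)$ needed to force a linear-size complete bipartite structure, so one must genuinely exploit the two-dimensional order structure: choosing the reference line(s) so that goodness linearizes, and coping with the fact that curves of $B$ may cross one another, so ``$b$ lies below the endpoint of $r$'' is a down-set only of the local order at $x=l_r$, not of a single global order. One cannot invoke the semi-algebraic or o-minimal same-type machinery of \cite{alon} here, since $x$-monotone curves have unbounded description complexity; this is precisely where the present argument must depart from the proofs of Theorems~\ref{first} and~\ref{second} in \cite{solymosi}. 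A minor additional chore is to track the constants through the two pigeonhole reductions so that the final exponent $c_t$ depends on $t$ alone.
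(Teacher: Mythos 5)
Your stages 1 and 2 are fine (cutting each $t$-monotone curve into $x$-monotone arcs, pigeonholing over arc indices and over crossing types), but the proof has a genuine gap exactly where you flag ``the main obstacle'': stage 3 is a hope, not an argument. The set $G_r=\{b: b(l_r)<r(l_r),\ b(\rho_r)>r(\rho_r)\}$ is a down-set of the vertical order at $x=l_r$ intersected with an up-set of the vertical order at $x=\rho_r$, and these orders are \emph{different for every $r$}: the curves of $B$ may cross one another (once each), so the vertical order of $B$ changes by up to $\binom{n}{2}$ transpositions as $x$ sweeps across the picture, and which curves of $B$ are even defined at $x=l_r$ changes with $r$ as well. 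Your proposed fix --- pigeonhole over the $O(n)$ elementary slabs to pick ``a vertical line together with one further reference line'' so that goodness is governed by the orders at those two lines --- does not go through: each elementary slab contains only $O(1)$ endpoints, so fixing reference lines captures only the $r$'s whose endpoints lie in the chosen slabs, losing a factor of $n$; and for any other $r$, membership of $b$ in $G_r$ is not determined by $b$'s position at the reference lines, precisely because $b$ may cross other curves of $B$ between the reference line and $x=l_r$. The concluding interval lemma is sound, but you never legitimately arrive at a single linear order in which the $G_r$ are intervals. (This is also why the semi-algebraic route fails here: unlike for segments, the crossing relation of two $x$-monotone curves is not determined by boundedly many real parameters.)

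The missing ingredient is a positive-fraction \emph{cutting} statement, which is how the paper executes what your stage 3 is groping for. Lemma~\ref{sample} uses a random sample of $O(\log t)$ curves and the vertical decomposition of its arrangement to produce one trapezoid $\Delta$ containing an $\Omega(1/(t\log^2 t))$ fraction of a given point set while an $\Omega(1)$ fraction of the curves avoid the interior of $\Delta$; a curve avoiding a trapezoid passes entirely above or entirely below it, which is the correct, order-free substitute for your ``$b$ lies below $p_r$ and above $q_r$.'' Iterating this four times (Lemma~\ref{structure}) clusters the left and right endpoints of the red and blue subfamilies into four regions, after which a short topological case analysis (Theorem~\ref{key}) shows that constant fractions $B'$, $R'$ either all cross or are all disjoint; the paper then needs the weak regularity lemma (Theorem~\ref{sz}) to exclude the all-disjoint outcome and obtain Theorem~\ref{main1}. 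Without something playing the role of Lemma~\ref{sample}, your outline cannot be completed as written.
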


\begin{theorem}
\label{main2}
Let $F$ be a simple family of $n$ $t$-monotone curves in the plane with at least $\epsilon n^2$ disjoint pairs. Then there exists a constant $c_t$ that depends only on $t$, and two subfamilies $F_1,F_2 \subset F$, such that $|F_1|,|F_2|\geq \epsilon^{c_t} n$, and every curve in $F_1$ is disjoint to every curve in $F_2$.
\end{theorem}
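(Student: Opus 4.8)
The plan is to decompose each $t$-monotone curve into its $x$-monotone pieces, reduce disjointness to a bounded-complexity incidence condition between $x$-monotone arcs and points, and then apply a large-biclique theorem for bounded-complexity (equivalently, bounded VC dimension) bipartite relations, in the spirit of Pach--Solymosi's proof of Theorem~\ref{second}. Concretely, write each $\gamma_i$ as a concatenation $\gamma_i^{(1)}\cup\cdots\cup\gamma_i^{(t)}$ of at most $t$ maximal $x$-monotone sub-arcs, cut at the at most $t-1$ vertical tangencies. Then $\gamma_i$ and $\gamma_j$ are disjoint if and only if $\gamma_i^{(a)}$ and $\gamma_j^{(b)}$ are disjoint for every pair $(a,b)$ with $1\le a,b\le t$, and for each fixed $(a,b)$ the arcs $\{\gamma_i^{(a)}\}_i\cup\{\gamma_j^{(b)}\}_j$ form a simple family of $x$-monotone curves. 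So the core question is: when are two $x$-monotone curves $\alpha,\beta$ that cross at most once disjoint? If their $x$-projections are disjoint, this is a single comparison of interval endpoints. If the projections meet in an interval $[c,d]$, then since the two curves can switch which one is higher at most once along $[c,d]$, they are disjoint exactly when the same one is higher at both endpoints $x=c$ and $x=d$ (without the curves touching in between), and moreover $c$ and $d$ are themselves endpoints of the two arcs. Hence, after splitting the disjoint pairs according to the $O(1)$ combinatorial types describing how the two $x$-intervals overlap, the relation ``$\gamma_i^{(a)}$ disjoint from $\gamma_j^{(b)}$'' is governed by a bounded number of interval-endpoint comparisons together with incidences of the form ``the curve $\alpha$ passes above (or below) an endpoint of the curve $\beta$''.

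The essential use of the hypothesis that $F$ is simple is that the $x$-monotone arcs, pairwise crossing at most once, behave like a family of pseudo-segments, and for such a family the incidence system ``a point lies below a given $x$-monotone curve'' has bounded VC dimension --- for pseudolines this is combinatorially the same as ``below a line'', and the arc endpoints only add the bounded-complexity vertical-strip constraints. Since VC dimension grows only mildly under bounded Boolean combinations, and under the $t^2$-fold intersection used to pass from the arc-pair relations back to the curves $\gamma_i,\gamma_j$, the disjointness graph $D$ of $\gamma_1,\dots,\gamma_n$ has VC dimension at most some $d_t$ depending only on $t$; equivalently, the relevant arrangements admit $(1/r)$-cuttings of size $O_t(r^2)$, exactly as arrangements of genuine segments do.

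It then remains to invoke the large-biclique theorem for bounded-complexity bipartite relations (the bipartite specialization of the semi-algebraic / bounded-VC ``same-type'' results, run here with the pseudo-segment cuttings): a bipartite graph on $n+n$ vertices with at least $\epsilon n^2$ edges and VC dimension at most $d_t$ contains a complete bipartite subgraph with at least $\epsilon^{c_t}n$ vertices on each side --- one partitions the ground set into $\mathrm{poly}_t(1/\epsilon)$ cells that are homogeneous for $D$ (via a cutting or $\epsilon$-net argument) and pigeonholes the $\ge \epsilon n^2$ edges into a single ``complete'' cell. Applying this to $D$, whose edge count is $\ge \epsilon n^2$ by hypothesis, yields $F_1,F_2$ of size $\ge \epsilon^{c_t}n$ with every curve in $F_1$ disjoint from every curve in $F_2$; necessarily $F_1\cap F_2=\emptyset$, since no curve is disjoint from itself.

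The hard part is the middle step: showing that the ``curve-above-point'' incidences arising from a simple --- but arbitrarily complicated --- family of $x$-monotone arcs genuinely have bounded VC dimension (the arc endpoints force several case distinctions, and one must exploit the single-crossing property carefully, in effect reproving that a simple family of $x$-monotone curves is a pseudo-segment arrangement with the same cutting behaviour as a family of true segments), and then tracking how this bound degrades through the Boolean combination and, above all, through the $t^2$-fold intersection --- which is precisely what fixes the dependence of $c_t$ on $t$.
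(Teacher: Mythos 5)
There is a genuine gap in the final step. Your reduction of disjointness to a Boolean combination of $O(t^2)$ predicates of the form ``a special point of $\gamma_j$ lies above/below an $x$-monotone piece of $\gamma_i$,'' together with $O(1)$ interval comparisons, is correct, and bounding the VC dimension of these predicates is actually the \emph{easy} part (the dual shatter function is $O(m^2)$ because an arrangement of $m$ pseudo-segments has quadratically many cells). The problem is the theorem you then invoke: ``a bipartite relation of bounded VC dimension with $\epsilon n^2$ edges contains a complete bipartite subgraph with parts of size $\epsilon^{c}n$'' is not a theorem. Bounded VC dimension gives an ultra-strong regularity lemma (almost all pairs of parts have density $<\epsilon$ or $>1-\epsilon$), but not \emph{complete} homogeneous pairs; the strong Erd\H{o}s--Hajnal property is known to fail for general bounded-VC relations. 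The biclique theorems of Pach--Solymosi, Alon et al., and Basu all use something stronger: each object is coded by a point in a parameter space of \emph{bounded dimension}, so that a cutting of that space produces cells that are genuinely homogeneous for the relation. That is exactly what is unavailable here, and exactly why the introduction stresses that no generalization of Theorem \ref{second} to objects of unbounded description complexity was previously known. Concretely, your cutting lives in the plane and localizes the $O(t)$ special points of each curve into small cells, but two curves whose special points lie in the same cells can still pass on opposite sides of a third cell --- localizing the endpoints of an $x$-monotone arc does not localize the arc --- so the cells of your partition are not homogeneous for the above/below predicates, and the pigeonhole step collapses.

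This is precisely the difficulty the paper's argument is built to overcome, and it is worth seeing how. After localizing endpoints by random sampling and vertical decompositions (Lemmas \ref{sample} and \ref{structure}, which correspond to the cutting part of your plan), the paper does \emph{not} get homogeneity for free; instead it proves the two-color Theorem \ref{key} by a topological case analysis: it fixes a single reference subcurve $\alpha'$ joining the two blue endpoint regions and shows, using simplicity and the way $\gamma\cup\alpha'\cup bd(\Delta_{bl}\cup\Delta_{br})$ separates $\Delta_{rl}$ from $\Delta_{rr}$, that each red curve's relationship to $\gamma$ is determined by its relationship to the one curve $\alpha'$. That constant-fraction statement is then boosted to the density statement via the weak bipartite regularity lemma (Theorem \ref{sz}). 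To repair your proof you would need to supply an argument of this kind (or some other substitute for homogeneity) at the point where you invoke the bounded-VC biclique theorem; as written, the step from ``bounded VC dimension'' to ``linear-size complete bipartite pair'' is the missing heart of the proof, not a citable black box.
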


\noindent Interestingly, Theorem \ref{main2} does not hold if one drops the simple condition.  That is, there exists a family $F$ of $n$ 1-monotone curves in the plane, with at least $n^2/4$ disjoint pairs, such that for any two subsets $F_1,F_2\subset F$ of size $\Omega(\log n)$ each, there exists a curve in $F_1$ that crosses a curve in $F_2$ \cite{pachtalk}.

While Theorem \ref{main2} is a new result, Theorem \ref{main1} is a special case of a theorem due to Fox, Pach, and T\'oth.  In \cite{fox}, Fox, Pach, and T\'oth generalized Theorem \ref{first} to families of curves in the plane with the property that any two curves intersect at most a constant number of times.  Let us remark that our proof is conceptually simpler.

 \subsection{Applications to topological graphs}

A \emph{topological graph} is a graph drawn in the plane such that its vertices are represented by points
 and its edges are represented by nonself-intersecting arcs connecting the corresponding points. The arcs are
allowed to intersect, but they may not intersect vertices except for
their endpoints.  Furthermore, no two edges are tangent, i.e., if two edges share an interior point, then they must properly cross at that point in common.  A topological graph is \emph{simple} if every pair of its edges intersect at most once.  Two edges of a topological graph \emph{cross} if their interiors share a point, and are \emph{disjoint} if they neither share a common vertex nor cross.

Over 40 years ago, Conway asked what is the maximum number of edges in a \emph{thrackle}, that is, a simple topological graph with no two disjoint edges.  He conjectured that every $n$-vertex thrackle has at most $n$ edges.  Lov\'asz, Pach, and Szegedy \cite{lovasz} were the first to establish a linear bound, proving that all such graphs have at most $2n$ edges.  Despite recent improvements by Cairns and Nikolayevsky \cite{cairns1} and Fulek and Pach \cite{fulek}, this conjecture is still open.  In the special case that the edges are drawn as 1-monotone curves, Pach and Sterling settled Conway's conjecture in the affirmative \cite{ster}.

Determining the maximum number of edges in a simple topological graph with no $k$ pairwise disjoint edges, seems to be a difficult task.  Pach and T\'oth \cite{pach2} showed that every simple topological graph with no $k$ pairwise disjoint edges has at most $O(n\log^{4k-8}n)$ edges.  They conjectured that for every fixed $k$, the number of edges in such graphs is at most $O(n)$.  A linear bound was obtained by Pach and T\"or\"ocsik \cite{pach}, in the special case that the edges are drawn as $1$-monotone curves (see also \cite{toth}).  As an application of Theorem \ref{main2}, we improve (for large $k$) the Pach and T\'oth bound, in the special case that the edges are drawn as $t$-monotone curves (where $t$ is independent of $n$).

\begin{theorem}
\label{disjoint}
Let $G = (V,E)$ be an $n$-vertex simple topological graph with edges drawn as $t$-monotone curves.  If $G$ does not contain $k$ pairwise disjoint edges, then $|E(G)| \leq n(\log n)^{c'_t \log k}$, where $c'_t$ depends only on $t$.
\end{theorem}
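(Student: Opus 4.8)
I would prove $|E(G)| \le n(\log n)^{c_t'\log k}$ by induction on the pair $(k,n)$; writing $g(k)=c_t'\log k$, so that the goal reads $m:=|E(G)|\le n(\log n)^{g(k)}$, the $\log k$ in the exponent will come from repeatedly \emph{halving} $k$, each halving costing only a $(\log n)^{\Omega_t(1)}$ factor that is absorbed by the accompanying drop in the exponent. The base case $k=2$ is the statement that a thrackle has $O(n)$ edges (Lov\'asz--Pach--Szegedy); that bound is independent of the drawing, so it applies to $t$-monotone thrackles, and $2n\le n(\log n)^{g(2)}$ once $n$ exceeds an absolute constant. For the inductive step fix $k\ge 3$ and assume, for contradiction, that $m>n(\log n)^{g(k)}=:nD$. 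First I clean up the instance: if some vertex $v$ has degree $\le D$, then $G-v$ is a simple topological graph with $t$-monotone edges on $n-1$ vertices, with more than $(n-1)(\log(n-1))^{g(k)}$ edges and still no $k$ pairwise disjoint edges, contradicting the inductive hypothesis in $n$. So every vertex has degree $>D$; since $m>nD$ gives $n<m/D$, the number of pairs of edges sharing an endpoint is $\sum_v\binom{d(v)}{2}\le (n-1)m<m^2/D$, a vanishing fraction of $\binom m2$ because $D=(\log n)^{g(k)}\to\infty$. Hence, up to a negligible term, every pair of edges of $G$ is either a \emph{crossing} pair or a \emph{disjoint} pair.

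\textbf{Case 1: at least $\varepsilon m^2$ disjoint pairs}, for a small constant $\varepsilon=\varepsilon(t)$. Apply Theorem~\ref{main2} to the family of $m$ curves $E(G)$ (a pair of disjoint curves is exactly a pair of disjoint edges, since curves contain their endpoints and vertices do not lie on other edges): this gives $F_1,F_2\subseteq E(G)$ with $|F_i|\ge\varepsilon^{c_t}m=:\alpha m$ such that every edge of $F_1$ is disjoint from every edge of $F_2$. If $G[F_1]$ contained $\lceil k/2\rceil$ pairwise disjoint edges and $G[F_2]$ contained $\lfloor k/2\rfloor$ pairwise disjoint edges, their union would be $k$ pairwise disjoint edges of $G$; so one of them, say $G[F_1]$, has no $\lceil k/2\rceil$ pairwise disjoint edges. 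Since $2\le\lceil k/2\rceil\le k-1$, the inductive hypothesis applies to $G[F_1]$, which lies on $n_1\le n$ vertices, so $\alpha m\le|F_1|\le n_1(\log n_1)^{g(\lceil k/2\rceil)}\le n(\log n)^{g(\lceil k/2\rceil)}$. But $g(k)-g(\lceil k/2\rceil)=c_t'\log(k/\lceil k/2\rceil)\ge c_t'\log\tfrac32$ is a positive constant depending only on $t$, so for $n$ beyond a threshold $n_0(t)$ we get $(\log n)^{g(k)-g(\lceil k/2\rceil)}>\alpha^{-1}$, whence $m\le n(\log n)^{g(k)}$, a contradiction. (For $n\le n_0(t)$ the trivial bound $m\le\binom n2$ is absorbed by choosing $c_t'$ large, as $n_0$ depends only on $t$.)

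\textbf{Case 2: fewer than $\varepsilon m^2$ disjoint pairs.} Then there are at least $m^2/8$ crossing pairs. For each edge $e$ let $e^-$ be the sub-arc obtained by deleting small neighborhoods of its two endpoints; $e^-$ is still $t$-monotone, and $e^-\cap (e')^-\ne\emptyset$ precisely when $e$ and $e'$ cross. Applying Theorem~\ref{main1} to $\{e^-:e\in E(G)\}$ gives $F_1,F_2\subseteq E(G)$ with $|F_i|\ge\alpha m$ (a constant $\alpha=\alpha(t)$) such that every edge of $F_1$ crosses every edge of $F_2$; as two crossing edges of a simple topological graph cannot also share a vertex, $V(F_1)\cap V(F_2)=\emptyset$, so one side --- say $V(F_1)$ --- spans at most $n/2$ vertices, and either $G[V(F_1)]$ already contains $k$ pairwise disjoint edges or the inductive hypothesis in $n$ bounds $|F_1|$ and hence $m$.

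\textbf{The main obstacle} is exactly Case 2. The recursion just described only halves $n$ while losing the factor $\alpha^{-1}$, and it could recur $\Omega(\log n)$ times, which by itself would yield merely a polynomial bound. To close the argument one must exploit $t$-monotonicity rather than treat the ``grid'' $F_1\times F_2$ as a black box: decompose each edge into $t$ $1$-monotone pieces (as in the paper's footnote) and refine $F_1,F_2$ by piece-index, so that the reference configuration becomes one of $x$-monotone arcs that pairwise cross; their pairwise crossings force their $x$-extents to interact, pushing the whole subfamily through a bounded number of vertical strips, and inside such a strip one is essentially in the $1$-monotone setting, where a Dilworth/partial-order argument in the spirit of Pach--T\"or\"ocsik yields either $k$ pairwise disjoint edges or an $O_k(n)$ bound. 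Making this precise while keeping every constant loss under control --- so that the overall recursion closes into $(\log n)^{O_t(\log k)}$ rather than a power of $n$ --- is the technical heart of the proof and is what fixes the value of $c_t'$.
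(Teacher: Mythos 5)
Your Case 1 is essentially the paper's Case 1 (the same induction on $k$ by halving, closed by applying Theorem \ref{main2} and charging the loss to the drop in the exponent), but your Case 2 contains the real content of the theorem and you have not supplied it; you acknowledge as much. The sketch you give there --- apply Theorem \ref{main1} to get two linear-size pairwise-crossing families, observe their vertex sets are disjoint, and recurse on the smaller side --- loses a constant factor per halving of $n$ and, as you note, can only yield a polynomial bound; the proposed repair via $1$-monotone pieces and a Pach--T\"or\"ocsik-style Dilworth argument is not carried out, and Theorem \ref{main1} plays no role in the paper's proof of this theorem.

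The two ideas you are missing are these. First, the threshold between the cases must depend on $n$, not be a constant $\varepsilon(t)$: the paper splits on whether the number of disjoint pairs exceeds $|E(G)|^2/((2c_1)^2\log^6 n)$. With that choice, Case 1 applies Theorem \ref{main2} with $\epsilon = 1/\mathrm{polylog}(n)$ and so costs a factor $(\log n)^{O(c_t)}$, which is exactly paid for by the drop from $c_t'\log k$ to $c_t'\log k - c_t'$ in the exponent when $k$ is halved (this is where $c_t'$ must be taken large). Second, when the number of disjoint pairs is at most $|E(G)|^2/\mathrm{polylog}(n)$, one does not recurse via pairwise-crossing families at all. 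Instead one passes to a bipartite subgraph and applies the Pach--T\'oth redrawing (reflect one vertex class, replace the edges in the strip by straight segments), after which every originally crossing pair crosses an \emph{even} number of times; hence the odd-crossing number of the new drawing is at most the number of disjoint pairs plus the number of pairs sharing a vertex, i.e.\ at most $|E(G)|^2/((2c_1)^2\log^6 n) + 2|E(G)|n$. Lemma \ref{bisect} then produces a balanced vertex partition cutting at most $|E(G)|/\log^2 n$ edges, and the induction on $n$ closes because $(\log(2n/3))^{c_t'\log k}$ falls short of $(\log n)^{c_t'\log k}$ by the factor $(1-\log(3/2)/\log n)^{c_t'\log k}$, which absorbs both the cut edges and the $1/\log^2 n$ loss. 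Without this odd-crossing-number/bisection-width machinery (or a genuine substitute), the induction on $n$ in your Case 2 does not close, so the proof as written has a real gap.
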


In 2009, Fox and Sudakov \cite{fox2} showed that all dense simple topological graphs have at least $\Omega(\log^{1 + \delta}n)$ pairwise disjoint edges, where $\delta \approx 1/40$.  As an immediate Corollary to Theorem \ref{disjoint}, we improve this lower bound (to nearly polynomial) in the special case that the edges are drawn as $t$-monotone curves

\begin{corollary}
Let $G = (V,E)$ be an $n$-vertex simple topological graph with edges drawn as $t$-monotone curves.  If $|E(G)| \geq \epsilon n^2$, then $G$ has at least $n^{\delta/\log\log n}$ pairwise disjoint edges, where $\delta$ depends only on $\epsilon$ and $t$.
\end{corollary}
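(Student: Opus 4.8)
The plan is to obtain the corollary as a direct contrapositive of Theorem~\ref{disjoint}, so essentially all the content is already in that theorem and what remains is elementary bookkeeping with logarithms. Throughout we may assume $n$ is large enough that $\log \log n \geq 1$ (for bounded $n$ the claimed bound $n^{\delta/\log\log n}$ is itself bounded, and since $|E(G)| \geq \epsilon n^2 > 0$ there is at least one edge, i.e.\ at least one ``pairwise disjoint'' edge, so the statement can be made to hold for small $n$ by shrinking $\delta$). Let $k$ denote the maximum number of pairwise disjoint edges in $G$; then $G$ contains no $k+1$ pairwise disjoint edges, so Theorem~\ref{disjoint} applied with $k+1$ in place of $k$ gives $|E(G)| \leq n(\log n)^{c'_t \log(k+1)}$.

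Next I would feed in the density hypothesis $|E(G)| \geq \epsilon n^2$, obtaining $\epsilon n \leq (\log n)^{c'_t \log(k+1)}$. Taking logarithms of both sides yields $\log(\epsilon n) \leq c'_t \log(k+1)\log\log n$, and rearranging gives $\log(k+1) \geq \frac{\log(\epsilon n)}{c'_t \log \log n}$, that is, $k+1 \geq (\epsilon n)^{1/(c'_t \log\log n)}$. Since $\epsilon$ is a fixed positive constant (indeed $\epsilon < 1/2$ because $G$ is simple), for all $n$ beyond a threshold $n_0 = n_0(\epsilon)$ we have $\epsilon n \geq \sqrt{n}$, and hence $k+1 \geq n^{1/(2c'_t \log\log n)}$. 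Absorbing the additive $1$ and the threshold $n_0$ into the constant, one concludes $k \geq n^{\delta/\log\log n}$ with $\delta = \delta(\epsilon,t)$; for concreteness $\delta = 1/(4c'_t)$ suffices, since $n^{1/(2c'_t\log\log n)} - 1 \geq n^{1/(4c'_t\log\log n)}$ once $n$ is large.

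There is no genuine obstacle here: the entire difficulty of the result is contained in Theorem~\ref{disjoint}, and the only points that require a little care are the passage from ``no $k+1$ pairwise disjoint edges'' to ``at least $k$ pairwise disjoint edges'' (the off-by-one, handled by a slight decrease of $\delta$) and the behavior for small $n$ where $\log\log n$ is not bounded away from $0$ (handled by noting the target quantity is bounded there). Both are standard and cost only constant factors in $\delta$.
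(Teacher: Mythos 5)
Your proposal is correct and is exactly the argument the paper intends: the paper gives no separate proof, calling this an ``immediate Corollary'' of Theorem~\ref{disjoint}, and your contrapositive computation (set $k$ to the maximum number of pairwise disjoint edges, plug $\epsilon n^2 \leq n(\log n)^{c'_t\log(k+1)}$, and solve for $k$) is the standard way to make that immediacy explicit. The only cosmetic blemish is the small-$n$ remark: for $n$ bounded one cannot literally force $n^{\delta/\log\log n}\leq 1$ by shrinking $\delta>0$, but this edge case is conventionally ignored (as the paper does) and does not affect the substance.
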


We note that Suk recently showed that every complete $n$-vertex simple topological graph has at least $\Omega(n^{1/3})$ pairwise disjoint edges \cite{suk}.

\section{A two-color theorem}

In this section, we will prove the following two-color theorem.

\begin{theorem}
\label{key}
Given a family $B$ of $n$ blue $t$-monotone curves and a family $R$ of $n$ red $t$-monotone curves in the plane such that $B\cup R$ is simple, there exist a $c''_t>0$ that depends only on $t$, and subfamilies $B'\subset B$, $R'\subset R$, such that $|B'|,|R'| \geq n/c''_t$, and either each curve in $B'$ intersects every curve in $R'$, or each curve in $B'$ is disjoint to every curve in $R'$.
\end{theorem}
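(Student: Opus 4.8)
The plan is to use a pigeonhole step to pass to a ``combinatorially homogeneous'' sub-situation, and then to treat the intersecting and disjoint outcomes separately; the disjoint case is the new and harder one. \emph{Step 1 (cutting and homogenization).} Perturb $B\cup R$ so it stays simple, no two curves share an endpoint, and no endpoint or vertical tangent point of one curve shares an $x$-coordinate with a feature of another. Decompose each curve into at most $t$ $x$-monotone arcs, $b=b^{(1)}\cup\cdots\cup b^{(t_b)}$ and $r=r^{(1)}\cup\cdots\cup r^{(t_r)}$. Assign to a pair $(b,r)$ its \emph{type}: for every $i\le t_b$, $j\le t_r$, whether $b^{(i)}$ and $r^{(j)}$ cross, and if not their relative position (entirely left, entirely right, or $x$-projections overlapping with $b^{(i)}$ above, or below, $r^{(j)}$ over the overlap --- well defined since over the common $x$-interval each arc is the graph of a function), together with the left-to-right order of all endpoint and tangent $x$-coordinates of $b$ and of $r$. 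There are at most $C_1(t)$ types, and simplicity forces at most one crossing coordinate per type. Pigeonholing over the $n^2$ pairs fixes a type $\tau$ and a bipartite graph $G\subseteq B\times R$ of pairs of type $\tau$ with $|E(G)|\ge n^2/C_1(t)$.

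\emph{Step 2 (the intersecting case).} If $\tau$ has a crossing coordinate $(i_0,j_0)$, every edge of $G$ is an intersecting pair realized by the $x$-monotone arcs $b^{(i_0)},r^{(j_0)}$, which form a simple two-coloured family of $x$-monotone curves with a dense bipartite set of crossing pairs, all in one combinatorial configuration. One extracts linear-size blue and red subfamilies all of whose cross-pairs are present; this is a bipartite form of Theorem~\ref{first} and is contained in the theorem of Fox and Pach \cite{fox} (alternatively it follows from the semialgebraic-type machinery of Alon et al.\ \cite{alon}, once one notes that the crossing pattern of a homogeneous simple family of $x$-monotone curves has bounded ``description complexity'' in the relevant sense). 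Pulling back to $B,R$ finishes this case.

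\emph{Step 3 (the disjoint case, via partial orders).} Otherwise $\tau$ has no crossing coordinate: every edge of $G$ is a disjoint pair and we know the exact position of each arc of $b$ relative to each arc of $r$. The plan is to realize, for each coordinate $(i,j)$ with prescribed position $\pi_{ij}$, the relation $D_{ij}=\{(b,r): b^{(i)}\text{ stands in position }\pi_{ij}\text{ to }r^{(j)}\}\supseteq E(G)$ as the set of blue--red comparable pairs of a partial order $\prec_{ij}$ on $B\cup R$ that restricts to a \emph{total} order on $B$ and a \emph{total} order on $R$, and in which no two comparable opposite-coloured curves cross. For ``left/right'' positions this is immediate: order $B$ by the right endpoint of $b^{(i)}$, $R$ by the left endpoint of $r^{(j)}$, declare $b\prec_{ij}r$ when $b^{(i)}$ lies entirely left of $r^{(j)}$, and take the transitive closure (which one checks introduces only pairs of the same kind). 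If all the $\prec_{ij}$ induced the same total order on each colour class, then $E(G)\subseteq\{(b,r): b\prec_{ij}r\ \forall i,j\}$ would be a monotone ``staircase'' in that pair of orders, i.e.\ $\{(b_a,r_c): a\le\sigma(c)\}$ with $\sigma$ nondecreasing, and a staircase with $\ge n^2/C_1(t)$ cells contains a combinatorial biclique of size $\ge n/(2C_1(t))$ on each side; every pair in that biclique is $\prec_{ij}$-comparable for all $i,j$, hence the corresponding blue and red curves are pairwise disjoint, giving the desired $B',R'$.

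\emph{Where the real work is.} Two points in Step 3 need genuine effort. First, building the orders $\prec_{ij}$ for the ``above/below'' positions: one must choose the right pair of geometric statistics by which to linearly order the two colour classes so that ``$b^{(i)}$ above $r^{(j)}$ over the overlap'' becomes exactly a comparability and transitivity holds on the nose --- fiddly but, I expect, routine. The genuinely hard part is $t\ge 2$: the $t^2$ orders $\prec_{ij}$ need not order $B$ (or $R$) the same way, so one cannot simply intersect them into a single staircase, and intersecting $t^2$ total orders can leave width $\Omega(n)$. Overcoming this seems to require either constructing all $t^2$ orders from one coherent set of statistics (so they agree on each colour class by design), or an iterated extraction inside $G$ that repeatedly passes to a linear-size sub-biclique eliminating one coordinate $(i,j)$ at a time while certifying that the remaining coordinates stay dense --- the surviving-density bookkeeping over the $\le t^2$ rounds being the crux. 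Either route, together with the losses in the reduction, yields $|B'|,|R'|\ge n/c''_t$ for a constant $c''_t$ depending only on $t$.
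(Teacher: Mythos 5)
Your Step 3 is where the proof has to happen, and it does not go through as described. The central claim---that for an ``above/below'' position $\pi_{ij}$ the relation $D_{ij}$ can be realized as the blue--red comparabilities of a partial order restricting to total orders on $B$ and on $R$, with no comparable opposite-coloured pair crossing (equivalently, that $D_{ij}$ is a staircase)---is false, not merely ``fiddly''. Take $b_1$ the segment from $(0,10)$ to $(10,0)$ and $r_1$ a slightly shorter parallel segment just below it, and $b_2$ the segment from $(0,-10)$ to $(10,20)$ with $r_2$ just below it; then $b_1$ lies above $r_1$ and $b_2$ lies above $r_2$ over the respective overlaps, while $b_1$ crosses $r_2$ and $b_2$ crosses $r_1$, and all four pairs have the same endpoint-interleaving type. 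So $D_{ij}$ restricted to $\{b_1,b_2\}\times\{r_1,r_2\}$ is a perfect matching; a downset in a product of two total orders containing both matched pairs must contain one of the crossed pairs, contradicting your requirement. Thus no choice of ``geometric statistics'' makes the above/below relation a staircase, and your homogenization by type does not rescue it since arbitrarily many such $2\times 2$ configurations can share one type. On top of this, the $t\ge 2$ coherence problem you flag as ``the crux'' is left entirely unresolved (you offer two possible strategies without carrying out either, and the ``surviving-density bookkeeping'' is exactly the part that needs proof). Step 2 is also not self-contained: you delegate the crossing case to Fox--Pach, which is logically admissible as a citation but abandons the goal of a direct argument.

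For contrast, the paper avoids order-theoretic structure entirely. It proves a sampling lemma (Lemma \ref{sample}): a random $O(\log t)$-size subsample of the curves induces a vertical (trapezoidal) decomposition with $O(t\log^2 t)$ cells, none of whose interiors meets more than half the curves, so some trapezoid captures a constant fraction of a given point set while a constant fraction of the curves avoid its interior. Applying this four times localizes the left and right endpoints of linear-size subfamilies $B_1, R_1$ into trapezoids $\Delta_{bl},\Delta_{br}$ and $\Delta_{rl},\Delta_{rr}$ with the blue regions disjoint from the red ones (Lemma \ref{structure}). The homogeneous pair is then extracted by a purely topological case analysis: fixing one reference arc $\alpha'$ of a blue curve joining $bd(\Delta_{bl})$ to $bd(\Delta_{br})$, the position of $\Delta_{rl}$ and $\Delta_{rr}$ among the cells of $\gamma\cup\alpha'\cup bd(\Delta_{bl}\cup\Delta_{br})$ forces, for each blue $\gamma$, that a red curve $\beta$ crosses $\gamma$ if and only if (or regardless of whether) $\beta$ crosses $\alpha'$, using only that the family is simple. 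You would need to replace your Step 3 (and ideally Step 2) with an argument of comparable strength before this proposal constitutes a proof.
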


In what follows, we will prove a sequence of lemmas that will lead to the proof of Theorem \ref{key}.  First we need some definitions.  Let $F$ be a simple family of curves in the plane.  For $\gamma\in F$, the endpoint with the left (right) most $x$-coordinate we refer to as the \emph{left (right) endpoint} of $\gamma$.  By a slight perturbation, we can assume that all endpoints have unique $x$-coordinates, no curve has a vertical inflection point, no endpoint of one curve lies on another curve, and no three curves in $F$ have a nonempty intersection.  For any simply connected region $\Delta \subset \mathbb{R}^2$, we denote the boundary of $\Delta$ as $bd(\Delta)$.  For the rest of the paper, the term \emph{region} will always mean a simply connected region.

A point $q$ is called a \emph{critical point of} $\gamma \in F$, if $q$ is an endpoint of $\gamma$ or if $\gamma$ has a vertical tangent at $q$.  Now given a subset $S\subset F$, the \emph{vertical decomposition} of the arrangement of $S$ is constructed by subdividing the cells of the arrangement $\mathcal{A}(S) = \bigcup_{\gamma\in S}\gamma$ into trapezoid-like regions $\Delta_1,\Delta_2,...,\Delta_s$, by drawing a vertical line in both directions through every intersection point of a pair of curves and through every critical point of a curve in $S$ until it hits some element in $S$.  We let $\mathcal{T}(S)$ be the vertical decomposition of $S$, and for simplicity we will call the elements in $\mathcal{T}(S)$ \emph{trapezoids}.  See Figure \ref{trape}.  Note that a trapezoid may be unbounded.  Let $Reg = \bigcup_{S\subset F} \mathcal{T}(S)$ be the set of all trapezoids that can ever appear in the vertical decomposition for some $S\subset F$.  For each trapezoid $\Delta \in Reg$, let $D(\Delta)$ be the set of curves in $F$ that intersects the boundary of $\Delta$ but does not intersect the interior of $\Delta$.  One can easily check that $|D(\Delta)| \leq 4$ (see \cite{mat}).  Finally, we let $I(\Delta)$ denote the set of curves of $F$ intersecting the interior of $\Delta$.

\begin{figure}
\begin{center}
\includegraphics[width=150pt]{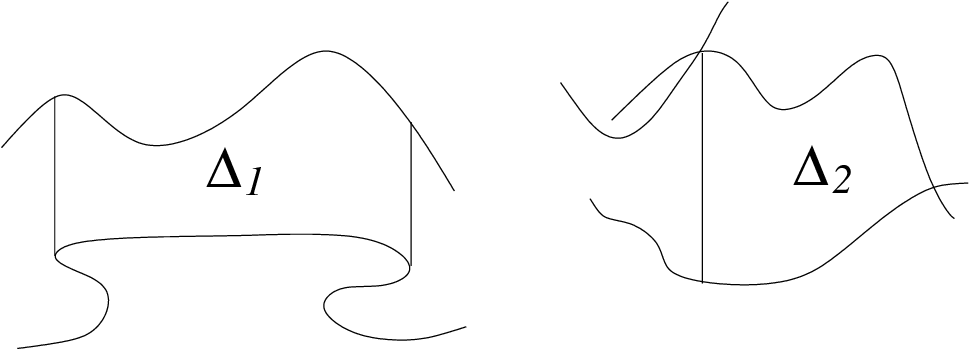}
  \caption{Trapezoids $\Delta_1$ and $\Delta_2$.  Notice that $|D(\Delta_1)| = 2$ and $|D(\Delta_2)| = 3$.}
  \label{trape}
 \end{center}
\end{figure}

\begin{lemma}
\label{sample}
Given a simple family $F$ of $n$ $t$-monotone curves and a set $P$ of $n$ points in the plane with no point in $P$ lying on any curve from $F$, there exists a constant $C_1$, subsets $F'\subset F$ and $P'\subset P$ of size $n/(C_1t\log^2 t)$ each, and a trapezoid $\Delta$, such that $P'\subset \Delta$ and no curve in $F'$ intersects the interior of $\Delta$.
\end{lemma}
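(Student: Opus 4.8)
The plan is to use a cutting / random sampling argument on the vertical decomposition. First I would take a random sample $S \subset F$ of size $r$ (to be chosen of order $\log n$, or a constant depending on the final target size — but actually we want $r$ roughly a constant times $\log(\text{something})$; in fact, since we want $|F'|$ and $|P'|$ of size linear in $n$ divided by a factor depending only on $t$, we should take $r$ to be a \emph{constant} depending on $t$). Form the vertical decomposition $\mathcal{T}(S)$. Since each curve in $S$ is $t$-monotone, it has at most $t-1$ critical interior points plus $2$ endpoints, and the curves are simple (pairwise at most one crossing), so $|\mathcal{A}(S)|$ has $O(r^2 + rt)$ crossing and critical points, hence $\mathcal{T}(S)$ consists of $s = O(r^2 t)$ trapezoids $\Delta_1,\dots,\Delta_s$, each with $|D(\Delta_i)|\le 4$. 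Some trapezoid $\Delta_j$ contains at least $n/s$ points of $P$; restrict attention to $P' \subset P \cap \Delta_j$ with $|P'| = n/s$.

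The key point is then to bound $|I(\Delta_j)|$, the number of curves of $F$ that cross the interior of $\Delta_j$ but are not in the sample. This is the standard "$\varepsilon$-cutting" Clarkson–Shor type estimate: if some trapezoid in the decomposition of a random $r$-sample is stabbed by more than $c\,\tfrac{n}{r}\log r$ curves, this happens with probability $< 1/2$, because such a trapezoid would survive in the sample's decomposition only if none of its (many) stabbing curves were chosen, while it is "defined" by the $\le 4$ curves in $D(\Delta_j)$. Here is where the $t$ enters: the number of possible trapezoids in $Reg$ defined by a given $4$-tuple is controlled, but each curve being $t$-monotone means a vertical line through a point can hit it in up to $t$ places, which multiplies the relevant counts by a factor polynomial in $t$; carrying this through gives $|I(\Delta_j)| \le C \tfrac{n}{r}\cdot(\text{poly} \log)\cdot \mathrm{poly}(t)$. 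Choosing $r$ a suitable constant multiple of $t\log^2 t$ (matching the $C_1 t\log^2 t$ in the statement) makes $|I(\Delta_j)| \le n - n/(C_1 t\log^2 t)$, so $F' = F \setminus (I(\Delta_j)\cup D(\Delta_j))$ has size at least $n/(C_1 t \log^2 t)$, and by construction no curve of $F'$ meets the interior of $\Delta := \Delta_j$ while $P' \subset \Delta$. We also need $|P'| = n/s \ge n/(C_1 t\log^2 t)$, which forces $r$ to be bounded by a constant times $\sqrt{t}\log t$ or so; balancing the two constraints on $r$ is the routine bookkeeping.

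The main obstacle I expect is making the Clarkson–Shor / cutting lemma quantitatively correct for $t$-monotone curves: the classical statement is for lines or bounded-degree curves, and one must verify that (i) a trapezoid in the vertical decomposition is still determined by a bounded number ($\le 4$) of curves, which is stated in the excerpt, and (ii) the number of trapezoids in $\mathcal{T}(S)$ is polynomial in $|S|$ and $t$ — this uses that $t$-monotone simple curves have an arrangement of near-quadratic complexity, with the $t$-dependence tracked explicitly through the number of vertical tangents. Once these two facts are in hand, the exceptional-set probability bound and the union bound over the $O(r^2 t)$ trapezoids go through verbatim, and the lemma follows by the probabilistic method (some sample achieves the bound on all its trapezoids simultaneously, and we pick the fullest trapezoid of that sample). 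I would also double-check the mild perturbation assumptions from the paragraph before the lemma (unique $x$-coordinates, no point on a curve, no triple intersection) so that the decomposition and the counting are clean.
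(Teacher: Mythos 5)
Your proposal is essentially the paper's own argument: a random sample $S$ of constant size $r$ (depending on $t$), the vertical decomposition $\mathcal{T}(S)$ with $O(r^2t)$ trapezoids each defined by at most $4$ curves, a Clarkson--Shor-type bound on the curves stabbing a trapezoid, and pigeonhole on $P$ to pick the fullest trapezoid. The one place your bookkeeping goes astray is the suggestion to take $r$ of order $t\log^2 t$: that would blow the trapezoid count up to $O(t^3\log^4 t)$ and give only $|P'|\geq n/(Ct^3\log^4 t)$; the correct resolution of the tension you identify is $r=\Theta(\log t)$ (not $O(\sqrt{t}\log t)$ either), where the $\log t$ is forced only by the union bound over the $O(t)$ trapezoids definable by each $\leq 4$-tuple --- the paper simply declares a trapezoid bad when $|I(\Delta)|\geq n/2$, shows the expected number of bad trapezoids is below $1/3$ at $r=C_3\log t$, and then gets $|F'|\geq n/2$ for free, with the $n/(C_1t\log^2 t)$ in the statement coming entirely from the point count $n/|\mathcal{T}(S)|$.
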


\begin{proof} Let $S$ be a random subset of $F$ by selecting each curve in $F$ independently with probability $p=r/n$.  Notice that for any subset $S \subset F$, $|\mathcal{T}(S)| \leq C_2(|S|^2 + |S|t)$ for some constant $C_2$, since $F$ is simple.  An easy calculation shows that $\mathbb{E}[|S|t] = rt$ and $\mathbb{E}[|S|^2] \leq r^2 + r$, and thus

$$\mathbb{E}[|\mathcal{T}(S)|] \leq C_2(r^2 + r + rt ) \leq 3C_2r^2t.$$

For $\Delta \in Reg$, we let $p(\Delta)$ denote the probability that $\Delta$ appears in the vertical decomposition of $S$.  Since $\Delta$ appears if and only if all curves of $D(\Delta)$ are selected into $S$ and none of $I(\Delta)$ is selected, we have

$$p(\Delta)  = p^{|D(\Delta)|} (1 - p)^{|I(\Delta)|}.$$

We call a trapezoid $\Delta \in Reg$ \emph{bad} if $|I(\Delta)| \geq n/2$, otherwise it is \emph{good}.  Let $Bad = \{\Delta \in Reg : |I(\Delta)| \geq n/2\}$, and let $X$ denote the number of bad trapezoids in the vertical decomposition of $S$.  Since $|D(\Delta)| \leq 4$, and any four curves define at most $20t$ trapezoids, we have

$$\mathbb{E}[X]  =  \sum\limits_{\Delta \in Bad}p^{|D(\Delta)|} (1 - p)^{|I(\Delta)|}\leq \sum\limits_{1 \leq i \leq 4}20t{n \choose i} \left(\frac{r}{n}\right)^{i}\left( 1- \frac{r}{n}\right)^{n/2}.$$

\noindent For $r = C_3\log t$, where $C_3$ is a sufficiently large constant and $ t \geq 2$, we have

$$\mathbb{E}[X] \leq   80t r^4e^{-r/2} = 80t(C_3\log t)^4 t^{-C_3/2}\leq  \frac{1}{3}.$$

\noindent Hence

$$\mathbb{E}\left[\frac{1}{9C_2r^2t}|\mathcal{T}(S)| + X\right] \leq \frac{2}{3}.$$

By setting $C_1 = 9C_2(C_3)^2$, there exists a sample $S$ such that

$$|\mathcal{T}(S)| \leq 9C_2r^2t = 9C_2(C_3\log t)^2t = C_1t\log^2t$$

\noindent and $X = 0$.  By the pigeonhole principle, there exists a good trapezoid $\Delta \in \mathcal{T}(S)$ that contains at least $n/(C_1t\log^2 t)$ points from $P$, and at least $n/2$ curves from $F$ do not intersect the interior of $\Delta$.  This completes the proof.
\end{proof}

\begin{lemma}
\label{structure}
Given a family $R$ of $n$ red $t$-monotone curves and a family $B$ of $n$ blue $t$-monotone curves in the plane such that $R\cup B$ is simple, there exists a constant $C_4$, subsets $R'\subset R, B'\subset B$ of size $n/(C_4t\log^2 t)^4$ each, trapezoids $\Delta_{bl}, \Delta_{br}$, and regions $\Delta_{rl}, \Delta_{rr}\subset \mathbb{R}^2$, such that
\begin{enumerate}
\item the left endpoint of each blue curve in $B'$ lies inside $\Delta_{bl}$,
\item the right endpoint of each blue curve in $B'$ lies inside $\Delta_{br}$,
\item the left endpoint of each red curve in $R'$ lies inside $\Delta_{rl}$,
\item the right endpoint of each red curve in $R'$ lies inside $\Delta_{rr}$,
\item $\Delta_{bl}\cup\Delta_{br}$ is disjoint to $\Delta_{rl}\cup \Delta_{rr}$, and
\item no curve in $R'$ intersects $\Delta_{bl}\cup\Delta_{br}$, and no curve in $B'$ intersects $\Delta_{rl}\cup \Delta_{rr}$.
\end{enumerate}
\end{lemma}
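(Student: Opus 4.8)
The plan is to apply Lemma \ref{sample} four times, each time extracting a large subfamily of curves whose relevant endpoints are confined to a single trapezoid that is avoided (in its interior) by the rest of the family, and then to use the separation structure of that trapezoid to place the four endpoint-regions on opposite sides of a curve.

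First I would set up the iteration. Let $P_B^\ell$ be the set of left endpoints of the blue curves and apply Lemma \ref{sample} to the family $B$ (thought of as a simple family of $t$-monotone curves) together with the point set $P_B^\ell$; technically one must delete from each curve a tiny neighborhood of its own left endpoint so that no point of $P_B^\ell$ lies on a curve, which does not affect $t$-monotonicity or simplicity. This produces a subfamily $B_1 \subset B$ and a subset of left endpoints, both of size $n/(C_1 t\log^2 t)$, lying in a region $\Delta_{bl}$ whose interior is disjoint from every curve in $B_1$. Now restrict attention to the curves of $B_1$, take their right endpoints, and apply Lemma \ref{sample} again to get $B_2 \subset B_1$ of size $|B_1|/(C_1 t\log^2 t)$ with right endpoints inside a region $\Delta_{br}$ avoided by the interiors of curves in $B_2$. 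Repeat with the red family to obtain $R_1$ (left endpoints in $\Delta_{rl}$) and then $R_2 \subset R_1$ (right endpoints in $\Delta_{rr}$). After four applications the surviving subfamilies have size $n/(C_1 t\log^2 t)^4$, which gives properties (1)–(4) with $C_4 = C_1$ (up to an absolute constant absorbed into $C_4$). Note that because Lemma \ref{sample} also guarantees that at least $n/2$ curves of the input family avoid the trapezoid's interior, and because we always feed in the already-reduced subfamily, the four regions $\Delta_{bl},\Delta_{br},\Delta_{rl},\Delta_{rr}$ are each avoided by the interiors of all of $R' \cup B'$ (after passing to the common refinement $R' = R_2$, $B' = B_2$).

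The remaining point, and the one I expect to be the main obstacle, is property (5): arranging that $\Delta_{bl}\cup\Delta_{br}$ is disjoint from $\Delta_{rl}\cup\Delta_{rr}$. The four regions produced above need not be disjoint at all, so one cannot get this for free. The idea is to exploit the fact that each region is a trapezoid of a vertical decomposition — in particular it is an $x$-monotone region bounded above and below by pieces of curves and on the sides by vertical segments — and that in the last three applications of Lemma \ref{sample} we are free to run the sampling argument \emph{inside} the trapezoid found in the previous step, so that the new trapezoid is nested in the old one. I would instead set things up so that the blue left-endpoint region and the red left-endpoint region are separated: after finding $\Delta_{bl}$, observe that the blue curves of $B_1$ emanate from $\Delta_{bl}$ and, being $t$-monotone, leave it in a controlled way; one then shows that a constant fraction of the \emph{red} curves have their left endpoints confined, after one more application of Lemma \ref{sample} applied to $R$ together with the red left endpoints but now also forbidding the region $\Delta_{bl}$ (add the boundary curves of $\Delta_{bl}$, a bounded-complexity set of $t$-monotone arcs, to the family being sampled), to a trapezoid $\Delta_{rl}$ lying entirely in one cell of the arrangement of $bd(\Delta_{bl})$, hence disjoint from $\Delta_{bl}$. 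Iterating this bookkeeping — at each of the four stages we forbid all previously constructed regions by throwing their bounded-complexity boundaries into the sampled family — costs only a constant factor in the probabilistic estimate (the bound $|D(\Delta)|\le 4$ and "$20t$ trapezoids per four curves" in Lemma \ref{sample} are unaffected by adding $O(1)$ extra curves) and yields four pairwise-disjoint regions; relabeling so that the two blue regions are on one side and the two red regions on the other gives (5).

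One subtlety to flag: disjointness of all four regions is slightly stronger than what (5) asks for, so in fact it suffices to separate the pair $\{\Delta_{bl},\Delta_{br}\}$ from the pair $\{\Delta_{rl},\Delta_{rr}\}$, and one may choose the order of the four applications of Lemma \ref{sample} to make this convenient — e.g.\ first pin down both blue regions (getting $B_2$ and $\Delta_{bl},\Delta_{br}$), then apply Lemma \ref{sample} to $R$ while forbidding $\Delta_{bl}\cup\Delta_{br}$, which is still a region of bounded description complexity, so that both subsequent red trapezoids are automatically disjoint from it. The only genuine work is checking that forbidding a constant number of extra bounded-complexity curves leaves the counting bounds in Lemma \ref{sample} intact up to constants, and that a trapezoid of the refined decomposition that avoids the interiors of those extra curves must lie in a single face of their arrangement and hence be disjoint from the forbidden region; both are routine.
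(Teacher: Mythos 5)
There is a genuine gap, and it starts at the very first step. You apply Lemma \ref{sample} to the family $B$ together with the blue left endpoints. But Lemma \ref{sample} returns \emph{two different} subsets: a set of points $P'$ lying inside the trapezoid $\Delta$, and a set of curves $F'$ whose members avoid the interior of $\Delta$. When the points and the curves come from the same color class these two subsets are essentially complementary: a blue curve whose left endpoint lies in the interior of $\Delta_{bl}$ necessarily meets that interior (truncating a tiny neighborhood of the endpoint does not help, since any trapezoid containing two or more distinct endpoints is larger than those neighborhoods). So there is no single subfamily $B_1$ that both has its left endpoints in $\Delta_{bl}$ and avoids $\Delta_{bl}$'s interior; depending on which of the two subsets you iterate on, you either lose properties (1)--(2) or lose the avoidance property, and your claim that all four regions are avoided by the interiors of all of $R'\cup B'$ cannot hold. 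The correct move, which is what the paper does, is to \emph{cross the colors}: apply Lemma \ref{sample} to the \emph{red} curves with the \emph{blue} left endpoints as the point set. This yields a trapezoid $\Delta_{bl}$ containing many blue left endpoints whose interior is avoided by many red curves; iterating (and always retaining only red curves avoiding the blue regions, and blue curves avoiding the red regions) forces the endpoints of the surviving red curves to lie outside $\Delta_{bl}\cup\Delta_{br}$, which is exactly the fact needed to carry out the final separation.

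Your proposed repair of property (5) --- throwing $bd(\Delta_{bl}\cup\Delta_{br})$ into the family being sampled when you process the red endpoints --- does not close this gap. It only guarantees that the resulting red trapezoid lies in a single face of the arrangement of $bd(\Delta_{bl}\cup\Delta_{br})$; that face may well be the \emph{inside} of $\Delta_{bl}$. For instance, if all left endpoints, red and blue alike, cluster in one small disk, a trapezoid capturing a constant fraction of the red left endpoints can sit entirely inside $\Delta_{bl}$, and no relabeling saves you. To rule this out you need to know in advance that the red endpoints under consideration avoid $\Delta_{bl}\cup\Delta_{br}$, and that is precisely what the cross-color application of Lemma \ref{sample} delivers for free: a red curve that does not meet the interior of $\Delta_{bl}$ cannot have an endpoint there. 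With that in hand, the paper finishes by a pigeonhole over the $O(1)$ cells of the arrangement $bd(\Delta_3)\cup bd(\Delta_{bl}\cup\Delta_{br})$, selecting a cell $\Delta_{rl}\subset\Delta_3$ that is disjoint from $\Delta_{bl}\cup\Delta_{br}$ and still contains a constant fraction of the red left endpoints; your bounded-complexity counting observations are fine, but they are addressing the wrong obstacle.
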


\begin{proof} Let $P_{bl}$ be the set of left endpoints among the blue curves in $B$, and apply Lemma \ref{sample} to the red $t$-monotone curves $R$ and the point set $P_{bl}$.  Then we obtain subsets $P'_{bl} \subset P_{bl}$, $R_1\subset R$, and a trapezoid $\Delta_{bl}$, such that $|P'_{bl}|,|R_1| \geq n/(C_1t\log^2 t)$, $P'_{bl} \subset \Delta_{bl}$, and no curve in $R_1$ intersects the interior of $\Delta_{bl}$.  Let $B_1\subset B$ be the blue curves whose left endpoint belongs to $P'_{bl}$, and discard all curves not in $B_1\cup R_1$.  See Figure \ref{pbll}.

Let $P_{br}$ be the right endpoint of the curves in $B_1$, and apply Lemma \ref{sample} to $P_{br}$ and $R_1$.  Then again, we obtain subsets $P'_{br} \subset P_{br}$, $R_2\subset R_1$, and a trapezoid $\Delta_{br}$, such that $|P'_{br}|,|R_2| \geq n/(C_1t\log^2 t)^2$, and no curve in $R_2$ intersects the interior of $\Delta_{br}$.  Let $B_2\subset B_1$ be the blue curves whose right endpoint belongs to $P'_{br}$, and discard all curves not in $B_2\cup R_2$.  See Figure \ref{pbr}.

We repeat this entire process to the curves in $B_2$ with the endpoints of $R_2$, and obtain subsets $B_3\subset B_2$, $R_3\subset R_2$, trapezoids $\Delta_{bl},\Delta_{br},\Delta_3,\Delta_4$, such that

\begin{enumerate}
 \item $|B_3|,|R_3| \geq n/(C_1t\log^2 t)^4$,

\item the left endpoint of each blue curve in $B_3$ lies inside $\Delta_{bl}$,

\item the right endpoint of each blue curve in $B_3$ lies inside $\Delta_{br}$,

\item the left endpoint of each red curve in $R_3$ lies inside $\Delta_{3}$,

\item the right endpoint of each red curve in $R_3$ lies inside $\Delta_{4}$, and

\item no curve in $R_3$ intersects $\Delta_{bl}\cup\Delta_{br}$, and no curve in $B_3$ intersects $\Delta_{3}\cup \Delta_{4}$.
\end{enumerate}

 \begin{figure}
  \centering
  \subfigure[]{\label{pbll}\includegraphics[width=0.2\textwidth]{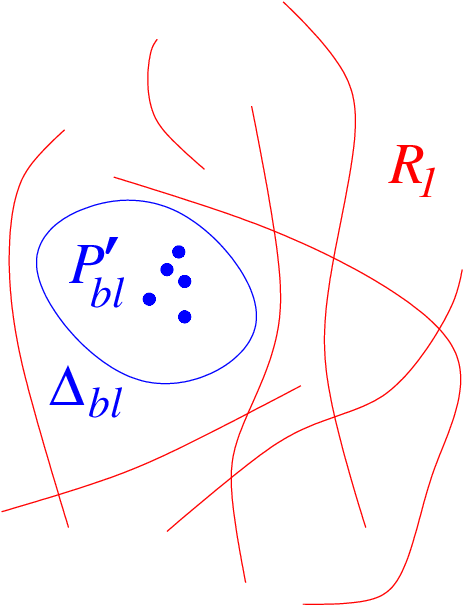}}\hspace{.5cm}
\subfigure[]{\label{pbr}\includegraphics[width=0.3\textwidth]{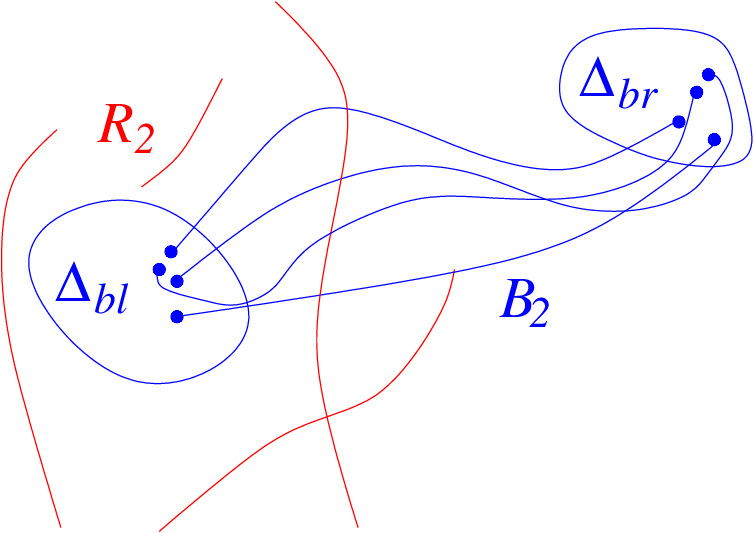}}\hspace{.5cm}
\subfigure[]{\label{br}\includegraphics[width=0.3\textwidth]{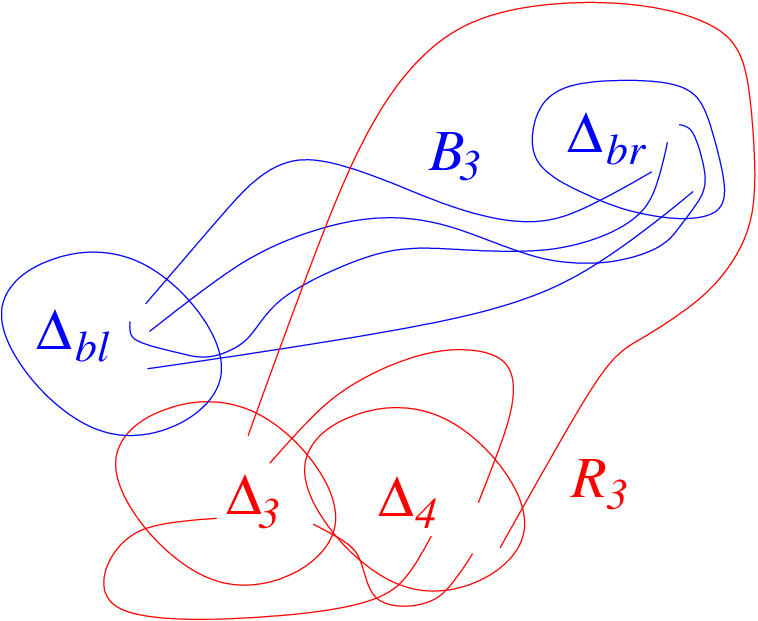}}
 \caption{Finding regions $\Delta_{bl},\Delta_{br},\Delta_3,\Delta_4$.}
  \label{case1}
\end{figure}

See Figure \ref{br}.  Now we will find regions $\Delta_{rl}\subset \Delta_3$ and $\Delta_{rr}\subset \Delta_4$ that satisfy the disjointness property (property 5). Let $P_{rl}$ be the set of left endpoints of $R_3$.  Recall that regions $\Delta_{bl},\Delta_{br},\Delta_3$, and $\Delta_4$ are trapezoids.  Since $R\cup B$ is simple,  this implies that

$$\mathbb{R}^2 \setminus (bd(\Delta_3)\cup bd(\Delta_{bl}\cup\Delta_{br}))$$

\noindent has at most (say) 80 connected components.  Since no point from $P_{rl}$ lies inside $\Delta_{bl}\cup\Delta_{br}$, by the pigeonhole principle there exists a simply connected region $\Delta_{rl}$ that contains at least $n/(3C_1t\log^2 t)^4$ points of $P_{rl}$, and $\Delta_{rl}$ is disjoint to $\Delta_{bl}\cup \Delta_{br}$.  Let $P'_{rl}\subset P_{rl}$ be the points that lie inside $\Delta_{rl}$, and let $R_4$ be the red curves whose left endpoints are in $P'_{rl}$.

We repeat this process to find region $\Delta_{rr}\subset \Delta_4$ and $R_5\subset R_4$, such that $|R_{5}| \geq n/(9C_1t\log^2 t)^4$, $\Delta_{rr}$ contains the right endpoints of $R_5$, and $\Delta_{rr}$ is disjoint to $\Delta_{bl}\cup\Delta_{br}$.  By letting $C_4 = 9C_1$, the statement of the lemma follows.

\end{proof}

We are now ready to prove Theorem \ref{key}.

\medskip

\noindent \emph{Proof of Theorem \ref{key}.}  We start by applying Lemma \ref{structure}, to obtain subsets $R_1\subset R$ and $B_1\subset B$, trapezoids $\Delta_{bl},\Delta_{br}$, and regions $\Delta_{rl},\Delta_{rr}$, with the properties described in Lemma \ref{structure}.  Suppose at least $|B_1|/2$ blue curves lie completely inside $\Delta_{bl}\cup\Delta_{br}$.  Then by property 6 in Lemma \ref{structure}, all of these blue curves must be disjoint to every red curve in $R_1$, and we are done.  Therefore, we can assume that there exist $B_2\subset B_1$ such that $|B_2| \geq |B_1|/2$, and each blue curve in $B_2$ does not lie completely inside $\Delta_{bl}\cup\Delta_{br}$.  Fix a curve $\alpha \in B_2$, and let $\alpha'$ be a subcurve of $\alpha$ that lies in $\mathbb{R}^2\setminus (\Delta_{bl}\cup \Delta_{br})$ and has endpoints on $bd(\Delta_{bl})$ and $bd(\Delta_{br})$.  See Figure \ref{alpha1}.  Now the proof falls into several cases.

\medskip

\noindent \emph{Case 1.}  Suppose that for at least $|B_2|/3$ curves $\gamma \in B_2$, regions $\Delta_{rl}$ and $\Delta_{rr}$ both lie in the same cell in the arrangement

$$\gamma\cup \alpha'\cup bd(\Delta_{bl}\cup\Delta_{br}).$$

\noindent See Figure \ref{c1}.  Then each red curve $\beta \in R_1$ intersects $\gamma$ if and only if $\beta$ intersects $\alpha'$.  Indeed, if $\beta$ intersects $\alpha'$, then $\beta$ must intersect $\gamma$ in order to come back inside the cell (since $R_1\cup B_1$ is simple and $\beta$ does not intersect $\Delta_{bl} \cup\Delta_{br}$).  Likewise, if $\beta$ is disjoint to $\alpha'$, then $\beta$ must lie completely inside of the cell.  Since at least half of the red curves in $R_1$ either intersect or are disjoint to $\alpha'$, the statement of the theorem follows.

 \begin{figure}[h]
  \centering
    \subfigure[Drawing of $\alpha'$.  Note that $\Delta_{bl}$ and $\Delta_{br}$ may or may not be disjoint.]{\label{alpha1}\includegraphics[width=0.3\textwidth]{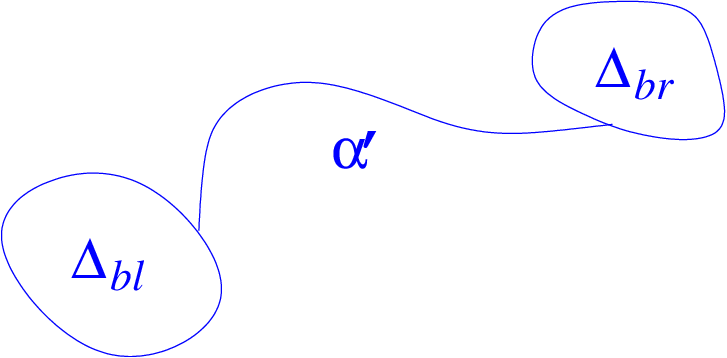}}\hspace{.5cm}
  \subfigure[Both $\Delta_{rl}$ and $\Delta_{rr}$ lie in the same cell.]{\label{c1}\includegraphics[width=0.3\textwidth]{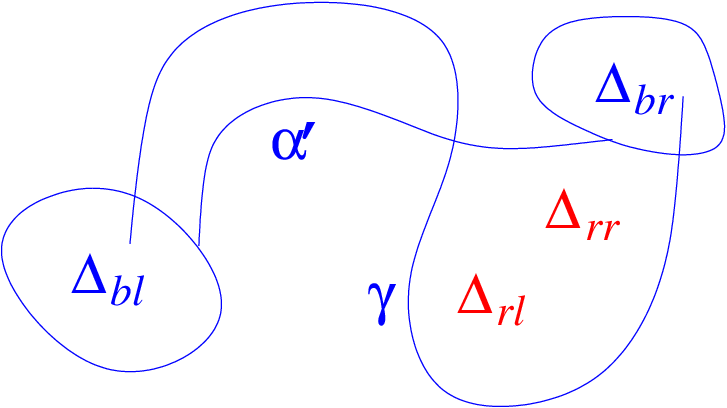}}\hspace{.5cm}
  \subfigure[Here, $\Delta_{rl}$ lies in a cell that is not incident to $\alpha'$.]{\label{c2}\includegraphics[width=0.3\textwidth]{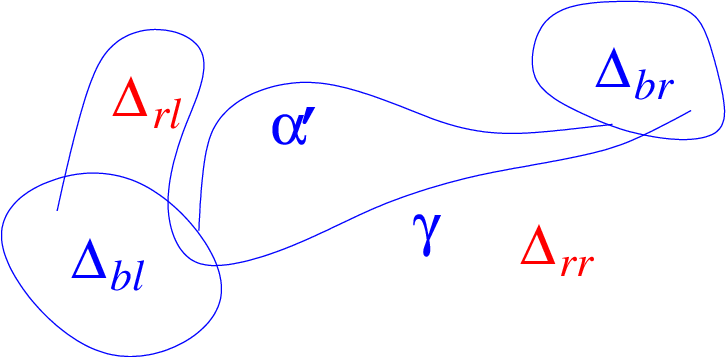}}
 \caption{Drawing of $\alpha'$, cases 1 and 2.}
  \label{case1}
\end{figure}

\medskip

\noindent \emph{Case 2.}  Suppose that for at least $|B_2|/3$ curves $\gamma \in B_2$, regions $\Delta_{rl}$ and $\Delta_{rr}$ lie in distinct cells in the arrangement

$$\gamma\cup \alpha'\cup bd(\Delta_{bl}\cup\Delta_{br}),$$

  \noindent and one of these cells is not incident to $\alpha'$ (i.e., the cell is surrounded by $\gamma$ and $\Delta_{bl}\cup\Delta_{br}$).  See Figure \ref{c2}.  Then clearly every red edge in $R_1$ must intersect $\gamma$, and the statement follows.

\medskip

 Therefore, we can assume that we are not in case 1 or 2.  Hence there exists a subset $B_3 \subset B_2$ such that $|B_3| \geq |B_2|/3$, and for each curve $\gamma \in B_2$, regions $\Delta_{rl}$ and $\Delta_{rr}$ lie in distinct cells in the arrangement

  $$\gamma\cup \alpha'\cup bd(\Delta_{bl}\cup\Delta_{br}),$$

  \noindent and both of these cells are incident to $\alpha'$.

  \medskip

\noindent \emph{Case 3.}  Suppose that for at least $|B_3|/3$ curves $\gamma \in B_3$, in the arrangement

  $$\gamma\cup \alpha'\cup bd(\Delta_{bl}\cup\Delta_{br}),$$

\noindent $\Delta_{rl}$ and $\Delta_{rr}$ lie in distinct cells that share $\alpha''\subset\alpha'$ as a common side. See Figures \ref{c33} and \ref{c4a}.

 \begin{figure}[h]
  \centering
    \subfigure[Subcurve $\alpha''$ drawn thick.]{\label{c33}\includegraphics[width=0.3\textwidth]{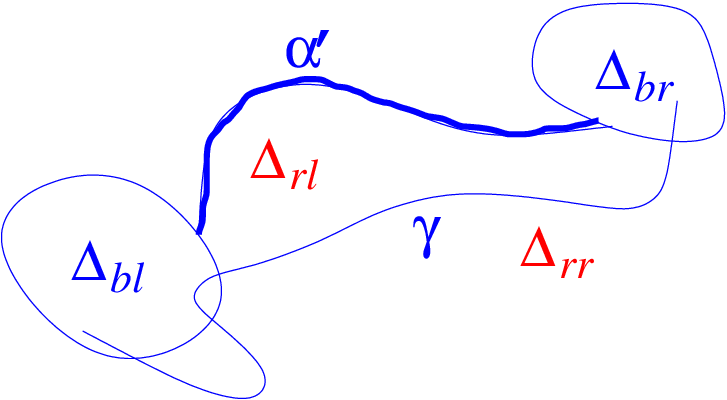}}\hspace{.5cm}
  \subfigure[Subcurve $\alpha''$ drawn thick.]{\label{c4a}\includegraphics[width=0.3\textwidth]{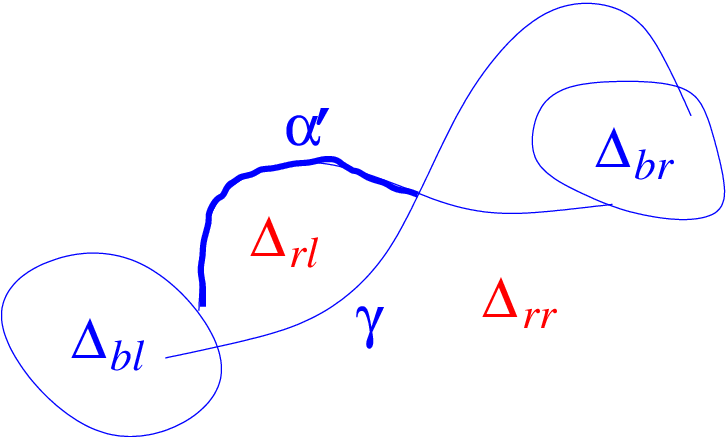}}
 \caption{Defining $\alpha''$.}
  \label{case1}
\end{figure}

Notice that $\alpha''$ is unique since $R\cup B$ is simple.  Then each red curve $\beta \in R_1$ intersects $\gamma$ if and only if $\beta$ is disjoint to $\alpha'$.  Indeed, notice that one of these cells must be surrounded by $\alpha''$, $\gamma$, $\Delta_{bl}\cup\Delta_{br}$.  Without loss of generality, assume that $\Delta_{rl}$ lies in such a cell.  Starting at $\Delta_{rl}$, each curve $\beta \in R_1$ must intersect either $\gamma$ or $\alpha''$ in order to leave the cell.  Suppose $\beta$ first intersects $\alpha''$.  Then $\beta$ must now be in the cell that $\Delta_{rr}$ lies in since these cells share $\alpha''$ as a common side. Since $R\cup B$ is simple, $\beta$ must remain the the current cell and, therefore, must be disjoint to $\gamma$.

Now if $\beta$ crossed $\gamma$ first, then $\beta$ must be in the same cell as $\Delta_{rr}$.  Indeed, otherwise $\beta$ would then cross $\alpha'$ and either be in the same cell as $\Delta_{rl}$ is in, or would be in a cell that is not adjacent to $\alpha''$.   Since $R\cup B$ is simple, we have a contradiction in either case. Hence $\beta$ is disjoint to $\alpha'$.

Since at least of half of the red curves in $R_1$ either intersect or are disjoint to $\alpha'$, the statement of the theorem follows.

\medskip

\noindent \emph{Case 4.}  Suppose that for at least $|B_3|/3$ curves $\gamma \in B_3$, in the arrangement

  $$\gamma\cup \alpha'\cup bd(\Delta_{bl}\cup\Delta_{br}),$$

\noindent $\Delta_{rl}$ and $\Delta_{rr}$ lie in distinct adjacent cells not sharing $\alpha'$ as a common side.  See in Figure \ref{c4c2}.  Then clearly, each red curve in $R_1$ intersects $\gamma$ since $R\cup B$ is simple, and the statement of the theorem follows.

\medskip

\noindent \emph{Case 5.}    Suppose that for at least $|B_3|/3$ curves $\gamma \in B_2$, in the arrangement

  $$\gamma\cup \alpha'\cup bd(\Delta_{bl}\cup\Delta_{br}),$$

\noindent $\Delta_{rl}$ and $\Delta_{rr}$ lie in distinct non-adjacent cells as in Figure \ref{c4b}.  This is the final case.  Then, clearly, each red curve in $R_1$ intersects $\gamma$ since $R\cup B$ is simple, and the statement of the theorem follows.$\hfill\square$

 \begin{figure}
  \centering
    \subfigure[Case 4.]{\label{c4c2}\includegraphics[width=0.2\textwidth]{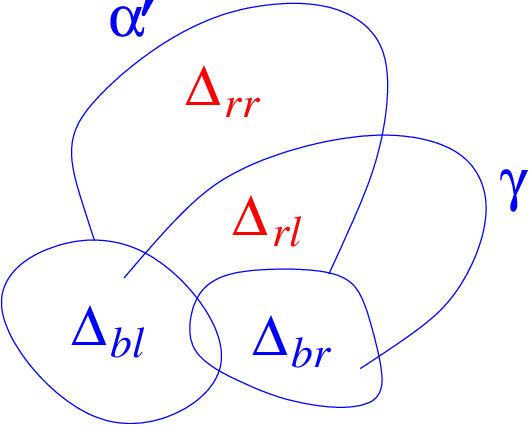}}\hspace{1.5cm}
\subfigure[Case 5.]{\label{c4b}\includegraphics[width=0.3\textwidth]{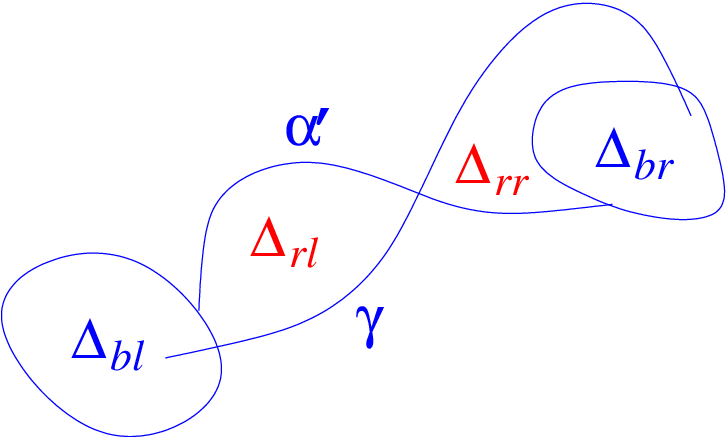}}
 \caption{Last two remaining cases.}
  \label{case1}
\end{figure}

\section{Proof of Theorems \ref{main1} and \ref{main2}}

The proof of Theorems \ref{main1} and \ref{main2} is now a standard application of Szemer\'edi's regularity lemma (see \cite{mat}, \cite{solymosi}).   Let us recall the weak bipartite regularity lemma.

\begin{theorem}[\cite{mat}]
\label{sz}
Let $G = (X_1,X_2,E)$ be a bipartite graph with parts $X_1$ and $X_2$ such that $|X_1| = |X_2| = n$.  Let $c\geq 2$.  If $|E| \geq \epsilon n^2$, then there exists subsets $Y_1\subset X_1, Y_2\subset X_2$ such that

\begin{enumerate}

\item  $|Y_1| = |Y_2| = \epsilon^{c^2} n$, and

\item $|E(Y_1,Y_2)| \geq \epsilon |Y_1||Y_2|$, and

\item $|E(Z_1,Z_2)| > 0$ for any $Z_i\subset Y_i$, with $|Z_i|\geq |Y_i|/c$ for $i \in \{1,2\}$.

\end{enumerate}

\end{theorem}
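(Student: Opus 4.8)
The plan is to prove Theorem~\ref{sz} by an iterative ``cleaning'' argument, in the style of the weak regularity lemmas of \cite{mat}. For $A\subseteq X_1$ and $B\subseteq X_2$ write $d(A,B)=|E(A,B)|/(|A|\,|B|)$ for the edge density, and call $(A,B)$ \emph{dense} if $d(A,B)\ge\epsilon$. Say that $(A,B)$ has a \emph{proportional empty rectangle} if there are $Z_1\subseteq A$ and $Z_2\subseteq B$ with $|Z_1|\ge|A|/c$, $|Z_2|\ge|B|/c$, and $E(Z_1,Z_2)=\emptyset$. Starting from $(A,B)=(X_1,X_2)$, which is dense by hypothesis, I would repeatedly replace the current pair by a strictly denser subpair for as long as it contains a proportional empty rectangle, and then read off $Y_1$ and $Y_2$ from the pair that survives.

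The engine of the argument is a one-step lemma: if a dense pair $(A,B)$ has a proportional empty rectangle --- which we may take to be minimal, with $|Z_1|=\lceil|A|/c\rceil$ and $|Z_2|=\lceil|B|/c\rceil$ --- then one of the three nonempty ``quadrants'' $(Z_1,B\setminus Z_2)$, $(A\setminus Z_1,Z_2)$, $(A\setminus Z_1,B\setminus Z_2)$ has density at least $\frac{c^2}{c^2-1}\epsilon$. This is a one-line averaging estimate: these three quadrants together carry all of the at least $\epsilon|A|\,|B|$ edges between $A$ and $B$, yet their total area is at most $|A|\,|B|-|Z_1|\,|Z_2|\le(1-c^{-2})|A|\,|B|$. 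Replacing $(A,B)$ by a densest such quadrant multiplies the density by a factor bounded away from $1$, while --- using $c\ge 2$ together with minimality of the rectangle to control the complements $A\setminus Z_1$ and $B\setminus Z_2$ --- each of $|A|$ and $|B|$ shrinks by at most a factor depending only on $c$.

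Since the density never exceeds $1$ but grows by a fixed multiplicative factor at every step, the process halts after at most $\log(1/\epsilon)/\log\frac{c^2}{c^2-1}$ steps, leaving a dense pair $(A^\ast,B^\ast)$ with \emph{no} proportional empty rectangle. Balancing the geometric decay of $|A^\ast|$ and $|B^\ast|$ against the geometric growth of the density --- if necessary running the cleaning with parameter a fixed multiple of $c$, so as to keep a multiplicative safety margin --- one obtains $|A^\ast|,|B^\ast|\ge\epsilon^{c^2}n$. Finally one trims $A^\ast$ and $B^\ast$ down to subsets $Y_1,Y_2$ of size exactly $\epsilon^{c^2}n$: conclusion~2 survives because the density was kept $\ge\epsilon$ at every stage (and one deletes lowest-degree vertices when trimming), while conclusion~3 is the delicate one --- preserving the absence of empty rectangles under the final restriction is exactly what the safety margin in the choice of cleaning parameter and rectangle thresholds is there to guarantee.

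The step I expect to be the crux is precisely this quantitative bookkeeping: tying the per-step density gain, which is only of order $1/c^2$ in relative terms, to the per-step size loss, which can be a factor of $c$ in one coordinate, tightly enough to reach \emph{exactly} the stated bound $\epsilon^{c^2}n$ rather than a weaker bound like $\epsilon^{O(c^2\log c)}n$, and simultaneously arranging that the restriction to sets of that exact size preserves both conclusions~2 and~3. The averaging estimate, the termination count, and the trimming are all routine; it is the optimisation of the constants and the exact-size extraction that carry the real content.
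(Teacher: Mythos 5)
The paper does not prove Theorem~\ref{sz}: it is quoted from \cite{mat} and used as a black box, so there is no in-paper argument to compare yours against. That said, your density-increment scheme --- pass to the densest of the three quadrants cut out by a minimal proportional empty rectangle, gaining a multiplicative factor $\frac{c^2}{c^2-1}$ in density while losing at most a factor $c$ per side, and stop when no proportional empty rectangle remains --- is the standard proof of this kind of ``one weakly regular pair'' statement. The averaging step, the size control via minimality and $c\geq 2$, and the termination count are all correct as you state them.

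The gap is exactly where you flag it: the endgame, and the fixes you sketch do not close it. First, the estimates you actually give do not yield the stated exponent: with at most $\ln(1/\epsilon)/\ln\frac{c^2}{c^2-1}\leq c^2\ln(1/\epsilon)$ steps and a loss of up to a factor $c$ per side per step, you land at $|A^\ast|,|B^\ast|\geq \epsilon^{c^2\ln c}\,n$, and ``balancing the decay against the growth'' with these same per-step bounds cannot remove the extra $\ln c$; a genuinely sharper selection rule would be needed to reach $\epsilon^{c^2}$. Second, and more seriously, the final trimming to sets of size exactly $\epsilon^{c^2}n$ cannot be rescued by ``running the cleaning with parameter a fixed multiple of $c$'': property~3 is relative to the ambient set size, so if the cleaning terminates early (say with $|A^\ast|$ comparable to $n$), the surviving pair only excludes empty rectangles with sides $\geq n/(Kc)$, which says nothing about empty rectangles with sides $\geq \epsilon^{c^2}n/c$; the safety factor you would need is of order $|A^\ast|\epsilon^{-c^2}/n$, which depends on $\epsilon$, and feeding that back into the cleaning as the parameter destroys the size bound. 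So as written the proposal establishes a correct but weaker statement: subsets with $|Y_i|\geq \epsilon^{O(c^2\log c)}n$ satisfying conclusions~2 and~3, with ``$\geq$'' in place of ``$=$'' in conclusion~1. To be fair, that weaker form is all this paper ever uses (in the proof of Theorem~\ref{main1} only a lower bound $|Y_i|\geq\epsilon^{O_t(1)}n$ matters, and conclusion~2 is never invoked), but it is not the theorem as quoted.
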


\medskip

\noindent \emph{Proof of Theorem \ref{main1}.}  Let $F$ be a simple family of $n$ $t$-monotone curves with $\epsilon n^2$ intersecting pairs.  Then by a random partition, we can partition $F$ into two subfamilies $F_1$, $F_2$, such that $|F_1|,|F_2| \geq n/3$ and the number of pairs of curves, one from $F_1$ and one from $F_2$, that intersect is at least $\epsilon n^2/2$.

Set $c = c''_t$, where $c''_t$ is defined in Theorem \ref{key}.  Then by Theorem \ref{sz}, there exist subsets $Y_1 \subset F_1$, $Y_2 \subset F_2$, of size $(\epsilon/2)^{c^2} n/3$ each, such that for any subsets $Z_1\subset Y_1$, $Z_2\subset Y_2$ with $|Z_i| \geq   |Y_i|/c$ for $i \in \{1,2\}$, there must be a curve in $Z_1$ that intersects a curve in $Z_2$.  By Theorem \ref{key}, there exists subsets $Z_1\subset Y_1,Z_2\subset Y_2$ such that every curve in $Z_1$ intersects every curve in $Z_2$, and

$$|Z_i| \geq \frac{|Y_i|}{c} \geq \frac{(\epsilon/2)^{c^2} }{3c}n \geq   \epsilon^{c_t}n,$$

\noindent where $c_t$ depends only on $t$. $\hfill\square$

\medskip

\noindent Theorem \ref{main2} follows by replacing the word ``intersect'' with ``disjoint'' in the proof above.

\section{Simple topological graphs with no $k$ pairwise disjoint edges}

As defined in \cite{crossing}, the {\it odd-crossing number} $\ocn(G)$ of a graph $G$ is the minimum possible number of unordered pairs of edges that cross an odd number of times over all drawings of $G$. The {\it bisection width} of a graph $G$, denoted by $b(G)$, is the smallest nonnegative integer such that there is a partition of
 the vertex set $V=V_1 \, \dot{\cup} \, V_2$ with $\frac{1}{3}\cdot |V|\leq |V_i|\leq \frac{2}{3}\cdot |V|$ for $i=1,2$, and  $|E(V_1,V_2)|= b(G)$. The following lemma, due to Pach and T\'oth, relates the odd-crossing number of a graph to its bisection width.

\begin{lemma}[\cite{pach2}]\label{bisect}
There is an absolute constant $c_1$ such that if $G$ is a graph with
 $n$ vertices of degrees $d_1,\ldots,d_n$, then
 $$b(G)\leq c_1\log n \sqrt{\ocn(G)+\sum_{i=1}^n d_i^2} .$$
 \end{lemma}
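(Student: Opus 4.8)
The plan is to carry out the classical passage from crossing numbers to bisection width (Leighton; Pach, Shahrokhi and Szegedy), but on a single fixed drawing, tracking for each edge only the \emph{parity} with which an auxiliary separating curve meets it; this parity bookkeeping is what lets $\ocn(G)$ appear in place of $\cn(G)$.

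First I would reduce the lemma to finding one separator. It is enough to show that every graph $H$ with $m\le n$ vertices admits a partition $V(H)=A\cup B$ with $|A|,|B|\ge m/3$ and $|E_H(A,B)| = O\big(\sqrt{\ocn(H)+\sum_{v}d_H(v)^2}\,\big)$. Restricting a fixed optimal drawing of $G$ to a subgraph only decreases $\ocn$ and $\sum d_i^2$, so a standard divide-and-conquer argument — repeatedly separate the currently largest piece and then regroup the pieces into two blocks of sizes in $[n/3,2n/3]$ — upgrades such cuts into a bisection of $G$ with an extra $O(\log n)$ factor coming from the depth of the recursion.

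For the one-separator claim, fix a drawing $D$ of $H$ realizing $\ocn(H)$, i.e.\ with exactly $\ocn(H)$ pairs of independent edges crossing an odd number of times. The key point is that for \emph{any} closed Jordan curve $\gamma$ in the plane that avoids the vertices of $D$ and meets the edges transversally, the bipartition of $V(H)$ into the parts inside and outside $\gamma$ has cut set exactly $\{\,e : |\gamma\cap e|\text{ is odd}\,\}$, because an edge (a vertex-to-vertex arc) has its endpoints on opposite sides iff it meets $\gamma$ an odd number of times. Hence it suffices to produce such a $\gamma$ that is vertex-balanced and meets $O\big(\sqrt{\ocn(H)+\sum_v d_H(v)^2}\,\big)$ edges with odd multiplicity. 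To build $\gamma$, planarize $D$ at its crossing points to obtain a planar graph $H^{\times}$ on vertex set $V(H)\cup X$, where $X$ is the set of crossings (each now a vertex of degree $4$); assign cost $d_H(v)$ to $v\in V(H)$ and cost $1$ to $x\in X$, and apply a weighted version of the planar separator theorem to find $S$ with $\sum_{u\in S}\mathrm{cost}(u)=O\big(\sqrt{\sum_v d_H(v)^2+|X|}\,\big)$ whose deletion leaves components each holding at most $\tfrac23|V(H)|$ vertices. Threading $\gamma$ through the faces of $H^{\times}$ around $S$ then gives a vertex-balanced curve meeting at most $\sum_{v\in S}d_H(v)+4|S\cap X|$ edges counted with multiplicity.

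The step I expect to be the main obstacle is sharpening this last estimate: $|X|=\cn(D)$ can be vastly larger than $\ocn(H)$, so one must argue that only the $\ocn(H)$ odd crossings can force an edge to be met an odd number of times. The idea is that the crossings of a pair $\{e,f\}$ that cross evenly come in pairs, and $\gamma$ can be homotoped across the corresponding bigons — in the spirit of the Hanani--Tutte parity argument — without changing its intersection parity with any edge and so as to avoid those crossings entirely, leaving only the crossings of the $\ocn(H)$ odd pairs as genuine obstructions. Making this rerouting precise, and verifying that it disturbs neither the balance of the partition nor the count of oddly-met edges (the subtle case being a bigon containing vertices or arcs of other edges), is the technical heart of the proof. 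Granting it, $\gamma$ meets $O\big(\sqrt{\ocn(H)+\sum_v d_H(v)^2}\,\big)$ edges with odd multiplicity, and combining this with the reduction above yields $b(G)\le c_1\log n\,\sqrt{\ocn(G)+\sum_{i=1}^n d_i^2}$.
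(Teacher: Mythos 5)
First, a point of comparison: the paper does not prove Lemma~\ref{bisect} at all --- it is imported verbatim from Pach and T\'oth \cite{pach2} as a black box --- so there is no in-paper argument to measure your write-up against; your proposal has to stand on its own.

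It does not, and the gap is exactly where you flag it. After planarizing the fixed drawing $D$ at its crossing set $X$, the weighted planar separator theorem only controls the separator in terms of $\sqrt{\sum_v d_v^2+|X|}$, and $|X|$ is the number of \emph{all} crossings of $D$, not the number of odd pairs: a drawing minimizing the number of oddly-crossing pairs can have arbitrarily many even crossings (already with $\ocn(G)=0$ the drawing may have $\Theta(n^4)$ crossings while the lemma demands a cut of size $O(\log n\sqrt{\sum_i d_i^2})$). So the entire content of the lemma is the replacement of $\sqrt{|X|}$ by $\sqrt{\ocn(G)}$, and the patch you propose --- homotoping $\gamma$ across the lenses formed by evenly-crossing pairs --- is the naive Hanani--Tutte argument that is known to fail: an innermost lens of an even pair need not be empty (it can contain vertices, which obstructs the homotopy), and sliding an arc across a lens of $\{e,f\}$ changes intersection parities with third edges, so the parity bookkeeping does not close up. The workable route, and the one Pach and T\'oth actually lean on, is to clean up the drawing \emph{before} planarizing, via their redrawing theorem (edges that cross every other edge evenly can be made crossing-free) or the strong Hanani--Tutte theorem, rather than to repair the separator afterwards; your sketch never supplies this ingredient.

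A secondary warning sign that your architecture is not the right one: to bound the \emph{bisection} width one only needs to split until every piece has at most $n/3$ vertices, which takes $O(1)$ rounds, and the per-level Cauchy--Schwarz accounting then loses only a constant factor. So if your single-separator claim with bound $O\bigl(\sqrt{\ocn(H)+\sum_v d_H(v)^2}\bigr)$ were true, you would deduce the lemma \emph{without} the $\log n$ factor. The presence of $\log n$ in the statement indicates that the loss occurs inside the separator construction itself, i.e., that the clean one-shot separator you are aiming for is not what is actually proved.
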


\medskip

 \noindent Since all graphs contain a bipartite subgraph with at least half of its edges, Theorem~\ref{disjoint} immediately follows from the following theorem.

 \begin{theorem}
 \label{proof}

 Let $G = (V,E)$ be an $n$-vertex simple topological bipartite graph with edges drawn as $t$-monotone curves.  If $G$ does not contain $k$ pairwise disjoint edges, then $|E(G)| \leq n(\log n)^{c'_t\log k}$, where $c'_t$ is a constant that depends only on $t$.

 \end{theorem}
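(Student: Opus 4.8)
The plan is to prove Theorem~\ref{proof} by the standard divide-and-conquer recursion on the number of vertices, using Theorem~\ref{main2} together with the bisection-width machinery of Lemma~\ref{bisect}. Write $f(n)$ for the maximum number of edges in an $n$-vertex simple topological bipartite graph with $t$-monotone edges and no $k$ pairwise disjoint edges; the goal is $f(n) \le n(\log n)^{c'_t \log k}$.

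First I would argue that any such graph cannot be too dense \emph{locally}, in the sense that on any vertex subset the edge count is controlled — the point being to set up the recursion. Concretely, suppose $G$ has $m = f(n)$ edges; I want to show $m/n$ is not much larger than $(\log n)^{c'_t\log k}$. The key substep is: if $G$ has at least $\epsilon n^2$ edges, then by Theorem~\ref{main2} (applied with the $t$-monotone curves being the edges), \emph{either} $G$ contains $k$ pairwise disjoint edges, or $\epsilon$ must be bounded — more precisely, iterating Theorem~\ref{main2} produces two families $F_1, F_2$ of size $\epsilon^{c_t} n$ with every edge of $F_1$ disjoint from every edge of $F_2$, and then picking one edge from each of $k$ ``levels'' of such a biclique-like structure yields $k$ pairwise disjoint edges once $n$ is large enough relative to $1/\epsilon$. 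Unwinding this, a $t$-monotone simple topological graph on $n$ vertices with no $k$ pairwise disjoint edges can have at most $\epsilon_k \cdot n^2$ edges only for $\epsilon_k$ satisfying $\epsilon_k^{c_t k} \gtrsim 1/n$, i.e.\ essentially $m = O(n^{2 - 1/(c_t k)})$ in the dense regime; this is the crude bound that feeds the recursion.

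Next I would run the bisection recursion. Since $G$ is simple, each pair of edges crosses at most once, so $\ocn(G) \le \cn(G) \le \binom{m}{2}$, but more usefully the absence of $k$ pairwise disjoint edges combined with the previous paragraph bounds the crossing number: an edge set with few disjoint pairs has many crossing pairs, and the dense-case bound above forces $\cn(G) = O(m^{2 - 1/(c_t k)})$ or similar. Plug this into Lemma~\ref{bisect}: with degrees $d_i$ summing (in squares) to at most $O(m^2/n)$ by convexity after discarding high-degree vertices, we get $b(G) \le c_1 \log n \sqrt{\ocn(G) + \sum d_i^2} = O(\log n) \cdot m^{1-1/(2c_t k)} \cdot (\text{lower order})$. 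Removing the $b(G)$ bisecting edges splits $V$ into $V_1 \,\dot\cup\, V_2$ with $|V_i| \le \tfrac{2}{3}n$, each inducing a graph with no $k$ pairwise disjoint edges, giving the recursion
\[
f(n) \le f(|V_1|) + f(|V_2|) + b(G) \le 2f(\tfrac{2}{3}n) + O\!\left(\log n \cdot f(n)^{1 - 1/(2c_t k)}\right).
\]
Solving this recursion — the standard computation, as in Pach–T\'oth — yields $f(n) \le n (\log n)^{c'_t \log k}$ with $c'_t$ depending only on $t$ (through $c_t$); the $\log k$ in the exponent arises because each halving of $n$ costs a factor roughly $(\log n)^{O(c_t k)}$-th root being absorbed, and iterating $\log n$ times with the $1 - 1/(2c_tk)$ contraction produces the $\log k$-fold logarithmic factor after balancing.

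The main obstacle I anticipate is making the dense-case bound quantitatively sharp enough: Theorem~\ref{main2} only gives subfamilies of size $\epsilon^{c_t} n$, so extracting $k$ pairwise disjoint edges requires iterating it $k$ times, and one must check carefully that the family sizes stay positive, which is exactly what forces the $n^{2 - 1/(c_t k)}$-type bound rather than something cleaner — and then one must verify this polynomial saving, eroded by the $\log n$ factor from Lemma~\ref{bisect}, still closes the recursion to give only a \emph{polylogarithmic} (not polynomial) overhead in the final bound. Getting the bookkeeping right so that the exponent of $\log n$ ends up as $c'_t \log k$ and not, say, $c'_t k$, is the delicate part; this is where one must be careful to recurse on the \emph{density} $m/n$ rather than naively on $m$, and to use that removing bisection edges does not destroy the $k$-disjoint-free property.
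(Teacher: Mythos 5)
Your proposal has two genuine gaps, and they are fatal as written. First, the bisection step cannot be closed the way you describe. Lemma~\ref{bisect} requires a bound on the \emph{odd-crossing number}, and your claim that few disjoint pairs forces $\cn(G)$ (or $\ocn(G)$) to be small is backwards: in the given drawing every crossing pair of edges crosses exactly once, so few disjoint pairs means \emph{many} pairs crossing an odd number of times, i.e.\ the odd-crossing number of that drawing is huge. The missing idea is the Pach--T\'oth redrawing trick, which is the heart of the paper's Case~2: using bipartiteness, one reflects one vertex class and replaces the edges inside a horizontal strip by straight segments, after which every pair of edges that crossed once in the original drawing crosses an \emph{even} number of times in the new drawing. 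Only then does one get $\ocn(G) \leq (\text{number of disjoint pairs}) + (\text{number of adjacent pairs}) \leq |E|^2/((2c_1)^2\log^6 n) + 2|E|n$, which is what makes $b(G) \leq |E|/\log^2 n$ and closes the recursion on $n$. Without this step there is no usable bound on $\ocn(G)$ at all.

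Second, your mechanism for producing the exponent $c'_t\log k$ does not work. You propose iterating Theorem~\ref{main2} $k$ times to extract $k$ pairwise disjoint edges, which (even if the iteration could be carried out) loses a factor per disjoint edge extracted and yields an exponent linear in $k$, not logarithmic; the subsequent ``balancing'' of a $1-1/(2c_tk)$ contraction against $\log n$ halvings is not a calculation that produces $(\log n)^{c'_t\log k}$. The paper instead runs a double induction on $n$ \emph{and} $k$: in the many-disjoint-pairs case, Theorem~\ref{main2} (with $\epsilon \approx 1/\log^6 n$) gives two families $E_1,E_2$ of size $|E|/(c\log n)^{6c_t}$ with every edge of $E_1$ disjoint from every edge of $E_2$; since $G$ has no $k$ pairwise disjoint edges, one of the $E_i$ has no $k/2$ pairwise disjoint edges, so $|E|/(c\log n)^{6c_t} \leq f(n,k/2)$. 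Halving $k$ at a cost of $(\log n)^{O(c_t)}$ per step, over $\log k$ steps, is exactly where $(\log n)^{c'_t\log k}$ comes from. You would need to restructure your argument around this $k$-recursion (and add the redrawing step) to recover the stated bound; as a smaller point, your bound $\sum_i d_i^2 = O(m^2/n)$ is a lower bound by convexity, not an upper bound --- the paper uses $\sum_i d_i^2 \leq 2|E|n$.
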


\begin{proof} We define $f(n,k)$ to be the maximum number of edges in an $n$-vertex simple topological bipartite graph with edges drawn as $t$-monotone curves, that does not contain $k$ pairwise disjoint edges.  We will prove by induction on $n$ and $k$ that

 $$f(n,k) \leq  n(\log n)^{c'_t\log k}.$$

 \noindent Note that $f(n,k) \leq {n\choose 2}$, and by \cite{fulek} $f(n,2) \leq 1.43n$.  Now assume the statement is true for $n' < n$ and $k' < k$, and let $G$ be an $n$-vertex simple topological bipartite graph with edges drawn as $t$-monotone curves, which does not contain $k$ pairwise disjoint edges. The proof falls into two cases.

 \medskip

 \noindent \emph{Case 1.}  Suppose there are at least $|E(G)|^2/((2c_1)^2\log^6 n)$ disjoint pairs of edges in $G$.  By Theorem \ref{main2}, there exist subsets $E_1,E_2 \subset E(G)$ such that $|E_1|,|E_2| \geq |E(G)|/(c\log n)^{6c_t},$ every edge in $E_1$ is disjoint to every edge in $E_2$, and $c$ is an absolute constant.  Since $G$ does not contain $k$ pairwise disjoint edges, this implies that there exists an $i \in \{1,2\}$ such that $|E_i|$ does not contain $k/2$ pairwise disjoint edges.  Hence

 $$\frac{|E(G)|}{(c\log n)^{6c_t}} \leq |E_i| \leq f(n,k/2).$$

 \noindent By the induction hypothesis, we have

 $$f(n,k/2) \leq n(\log n)^{c'_t\log(k/2)}  \leq n(\log n)^{c'_t\log(k) - c'_t}.$$

 \noindent For sufficiently large $c'_t$, we have $|E(G)|  \leq n(\log n)^{c'_t\log(k)}.$

\medskip

\noindent \emph{Case 2.}  Suppose there are at most $|E(G)|^2/((2c_1)^2\log^6n)$ disjoint pairs of edges in $G$.  In what follows, we will apply a redrawing technique that was used by Pach and T\'oth \cite{pach2}.  Since $G$ is bipartite, let $V_a$ and $V_b$ be its vertex class.  By applying a suitable homeomorphism to the plane, we can redraw $G$ such that

 \begin{enumerate}

 \item the vertices in $V_a$ are above the line $y = 1$, the vertices in $V_b$ are below the line $y = 0$,

 \item edges in the strip $0 \leq y \leq 1$ are vertical segments,

   \item we have neither created nor removed any crossings.
 \end{enumerate}

   \noindent  Now we reflect the part of $G$ that lies above the $y = 1$ line about the $y$-axis.
 Then erase the edges in the strip $0\leq y \leq 1$ and replace them by straight line segments that
reconnect the corresponding pairs on the line $y = 0$ and $y = 1$. Note that our topological graph is no longer simple, and the edges are no longer $t$-monotone.  See Figure \ref{redraw}, and note that our graph is no longer simple.

\begin{figure}
\centering
\includegraphics[width=.7\textwidth]{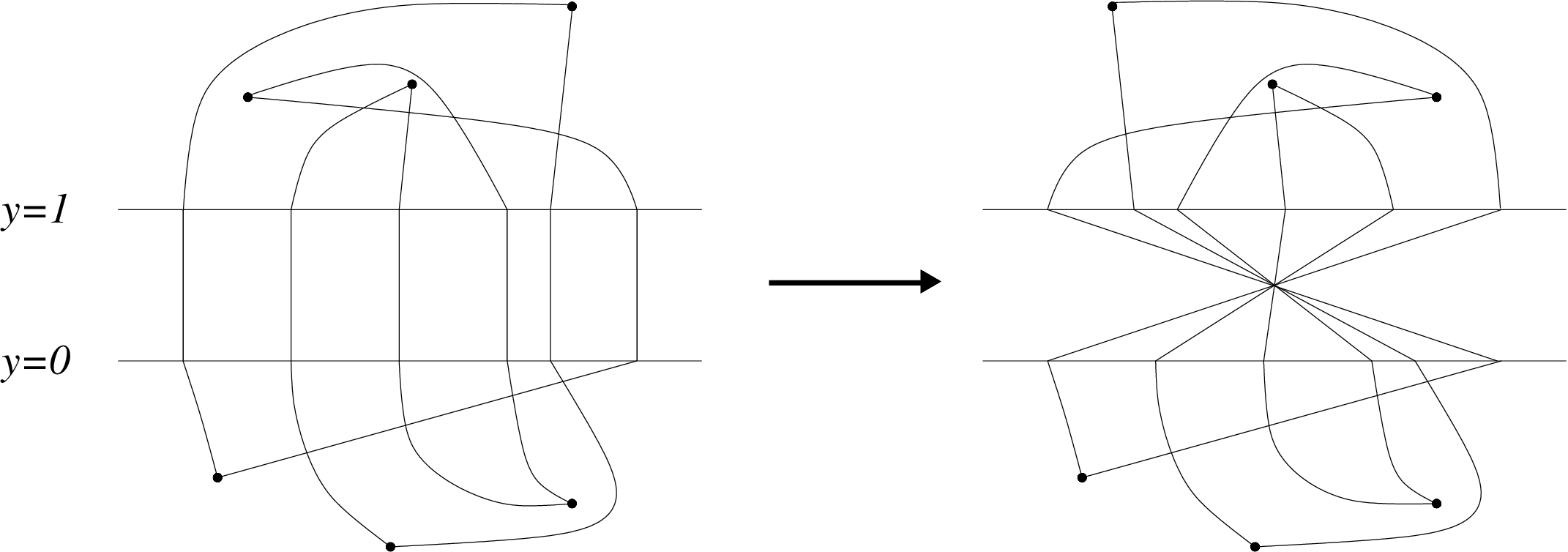}
\caption{Redrawing procedure}
\label{redraw}
\end{figure}

 Notice that if any two edges crossed in the original drawing, then they must cross an even number of times in the new drawing.  Indeed, suppose the edges $e_1$ and $e_2$ crossed in the original drawing.  Since $G$ is simple, they share exactly 1 point in common.  Let $k_i$ denote the number of times edge $e_i$ crosses the strip for $i \in \{1,2\}$, and note that $k_i$ must be odd. After we have redrawn our graph, these $k_1+k_2$ segments inside the strip will now pairwise cross, creating ${k_1 + k_2 \choose 2}$ crossing points.  Since edge $e_i$ will now cross itself ${k_i \choose 2}$ times, this implies that there are now

 \begin{equation}
 \label{parity}
 {k_1 + k_2 \choose 2} - {k_1\choose 2} - {k_2 \choose 2}
 \end{equation}

 \noindent crossing points between edges $e_1$ and $e_2$ inside the strip.  One can easily check that (\ref{parity}) is odd when $k_1$ and $k_2$ are odd.  Since $e_1$ and $e_2$ had 1 point in common outside of the strip, this implies that $e_1$ and $e_2$ cross each other an even number of times. Note that one can easily get rid of self-intersections by making local modifications in a small neighborhood at these crossing points.

 Hence, the odd-crossing number in our new drawing is at most the number of disjoint pair of edges in the original drawing of $G$, plus the number of pair of edges that share a common vertex.  Since there are at most
 $$\sum\limits_{v\in V(G)} d^2(v) \leq 2|E(G)|n$$

 \noindent pairs of edges that share a vertex in $G$, this implies

$$\ocn(G) \leq \frac{|E(G)|^2}{(2c_1)^2\log^6n} + 2|E(G)|n.$$

\noindent   By Lemma~\ref{bisect}, there is a partition of the vertex set $V=V_1 \, \dot{\cup} \, V_2$ with $\frac{1}{3}\cdot
 |V|\leq |V_i|\leq \frac{2}{3}\cdot |V|$ for $i=1,2$ and

 $$|E(V_1,V_2)| \leq b(G) \leq c_1\log n \sqrt{\frac{|E(G)|^2}{(2c_1)^2\log^6 n} +  4n|E(G)|  }.$$

 \noindent  If $|E(G)|^2/((2c_1)^2\log^6 n) \leq  4n|E(G)|$, then we have $|E(G)| \leq n(\log n)^{c'_t\log k}$ and we are done.  Therefore, we can assume

   $$ b(G) \leq c_1\log n \sqrt{\frac{2|E(G)|^2}{(2c_1)^2\log^6 n} } \leq \frac{ |E(G)|}{\log^2 n}.$$

\noindent Let $|V_1| = n_1$ and $|V_2| = n_2$.  By the induction hypothesis we have

 $$
 \begin{array}{ccl}
 |E(G)| & \leq & b(G)+  n_1(\log n_1)^{c_t'\log k} + n_2(\log n_2)^{c_t'\log k}\\\\
     & \leq &  \frac{|E(G)|}{ \log^2  n}  + n(\log(2n/3))^{c_t'\log k} \\\\
    & \leq &   \frac{|E(G)|}{ \log^2  n}+  n(\log n  - \log(3/2))^{c_t'\log k},  \\\\
 \end{array}$$

\noindent which implies
$$|E(G)| \leq n(\log n)^{c_t'\log k} \frac{(1 - \log(3/2)/\log n)^{c_t'\log k}  }{1 - 1/\log^2n}  \leq n(\log n)^{c_t'\log k}.$$

\end{proof}

\section{Concluding Remarks}

  It would be interesting to see if one can remove the $t$-monotone condition in Theorem \ref{main2} and Lemma \ref{sample}.

\begin{problem}

Given a simple family $F$ of $n$ curves in the plane, and an $n$-element point set $P$ such that no point of $P$ lies on a curve from $F$, does there exist a constant $\epsilon > 0$, a region $\Delta\subset \mathbb{R}^2$, subsets $F'\subset F, P'\subset P$ of size $\epsilon n$ each, such that $P'\subset \Delta$ and every curve in $F'$ does not intersect the interior of $\Delta$?

\end{problem}

We list two more unsolved problems related to this paper.

\begin{problem}
Let $G$ be an $n$-vertex simple topological graph with edges drawn as 2-monotone curves.  If $G$ has no two disjoint edges, then does $G$ have at most $n$ edges? What if the edges are drawn as 3-monotone curves?

\end{problem}

\begin{problem}

Given a simple family $F$ of $n$ 2-monotone curves in the plane with no $3$ pairwise disjoint members, can one color the members in $F$ with at most $c$ colors, such that each color class consists of pairwise crossing members?

\end{problem}

\end{document}